\documentclass[11pt]{amsart}
\usepackage{amsmath, amsthm, amssymb}
\usepackage{amsmath,amscd}
\usepackage{mathabx}
\usepackage{hyperref}
\usepackage{mathrsfs}
\usepackage{xcolor, changepage} 
\usepackage{graphicx}
\usepackage{titletoc}
\usepackage{enumerate}
\usepackage{accents}

\usepackage{tikz}
\usetikzlibrary{matrix,arrows}
\usetikzlibrary{shapes}
\usetikzlibrary{calc}
\usetikzlibrary{arrows}
\usetikzlibrary{decorations.pathreplacing,decorations.markings}
\usepackage[all]{xy}
\usepackage{tikz-cd}

\usepackage{caption}
\usepackage{subcaption}
\usepackage{geometry}
\theoremstyle{plain}
\newtheorem{theorem}{Theorem}
\newtheorem{proposition}[theorem]{Proposition} 
\newtheorem{lemma}[theorem]{Lemma}
\newtheorem{remark}[theorem]{Remark}
\newtheorem{corollary}[theorem]{Corollary}
\newtheorem{definition}[theorem]{Definition}

\numberwithin{theorem}{section}

\DeclareMathOperator{\Lip}{Lip}
\DeclareMathOperator{\dist}{dist}

\DeclareMathOperator{\spt}{spt}
\DeclareMathOperator{\Hess}{Hess}
\DeclareMathOperator{\ggraph}{graph}
\DeclareMathOperator{\reg}{reg}

\numberwithin{equation}{section}

\title[foliation in AF manifolds]{foliation of area-minimizing hypersurfaces in asymptotically flat manifolds of higher dimension}
\author{Shihang He}
\address{Key Laboratory of Pure and Applied Mathematics,
	School of Mathematical Sciences, Peking University, Beijing, 100871, P. R. China
}
\email{hsh0119@pku.edu.cn}
\author{Yuguang Shi}
\address{Key Laboratory of Pure and Applied Mathematics,
	School of Mathematical Sciences, Peking University, Beijing, 100871, P. R. China
}
\email{ygshi@math.pku.edu.cn}

\author{Haobin Yu}
\address{
	School of Mathematics, Hangzhou Normal University, Hangzhou, 311121, P. R. China
}
\email{yhbmath@hznu.edu.cn}
\thanks{S. He, Y. Shi  are funded by the National Key R\&D Program of China Grant 2020YFA0712800 and NSFC12431003. H. Yu is funded by Zhejiang Provincial NSFC No. LY24A010008}

\subjclass[2010]{Primary 53C21, secondary 53C24 }

\begin{document}
\begin{abstract}
We prove the existence of foliations by area-minimizing hypersurfaces in asymptotically flat (AF) manifolds with arbitrary dimension and 	arbitrary ends.
Also we provide behaviors of those hypersurfaces near the infinity of AF ends and demonstrate that the singular set of those area-minimizing hypersurfaces is outside AF ends (cf Theorem \ref{thm: foliation}). Building on the positive mass theorem for AF manifolds with arbitrary ends, we establish a global behavior for free-boundary area-minimizing hypersurfaces inside coordinate cylinders in AF manifolds  of dimension less than or equal to $8$ (cf. Theorem \ref{thm: 8dim Schoen conj}).
\end{abstract}
	\maketitle
	\tableofcontents	
	
	\section{Introduction}
	
	\subsection{Foliations of Area-minimizing hypersurfaces in General AF manifolds}

    $\quad$
	
	In  \cite{HSY24}, foliations of  area-minimizing hypersurfaces in asymptotically flat  (AF) manifolds of dimensions no greater than $7$ were obtained. Such foliations play an important role in the study of effective versions of positive mass theorem (PMT). In this paper we generalize this to higher dimensions. Namely, we  prove the following:

	\begin{theorem}\label{thm: foliation}
		Let $(M^{n+1},g)$ be an AF manifold of dimension $n\geq 3$ with an AF end $E$ of  order $\tau>\frac{n}{2}$ and some arbitrary ends. Suppose $(M^{n+1},g)$ is geometric bounded, i.e. its sectional curvature is bounded, injective radius has a positive uniform lower bound. Then 
		
		(1) For each $t\in\mathbf{R}$, there exists an area-minimizing hypersurface $\Sigma_t$  which is asymptotic to the coordinate hyperplane \{z=t\}. Moreover, there exists a compact set $\mathbf{K}$ depending only on the geometry of $(E,g|_E)$ (independent of $t$), such that $(\Sigma_t\setminus\mathbf{K})\cap E$ can be represented by some graph, $i.e.$ 
        $$(\Sigma_t\setminus\mathbf{K})\cap E = \{(y,u_t(y)):y\in\mathbf{R}^{n} \setminus \mathbf{D}\}$$ 
        for each $t\in\mathbf{R}$, where  $\mathbf{D}$ is a compact set of $\mathbf{R}^{n}$ and $u_t\in C^\infty(\mathbf{R}^n\setminus\mathbf{D})$. In particular, $\Sigma_t$ is smooth outside $\mathbf{K}$. 
		
		(2) There is a constant $C$ depending only on $k$ and $(M,g)$, such that for any  $k\ge 1$, $\epsilon>0$, the graph function satisfies 
		$|u_t-t|+|y|^k|D_ku_t(y)|\leq C|y|^{1-\tau-k+\epsilon}$ as $|y|\to +\infty$.
		
		(3) There exists a constant $T>0$ such that any area-minimizing hypersurface $\Sigma_t$ with $|t|>T$ is smooth and any area-minimizing hypersurface satisfying the condition above is unique. Furthermore, the region above $\Sigma_T$(or below $\Sigma_{-T}$) can be $C^1$ foliated by $\{\Sigma_t\}$.
		
		Moreover, if $M$ is an  AF manifold with some arbitrary ends and without geometric bounded condition, then the conclusions $(1)-(3)$ hold for $t$ with $|t|>T_0$ for some sufficiently large $T_0$.
	\end{theorem}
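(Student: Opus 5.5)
We will follow the strategy of \cite{HSY24} and replace the low-dimensional regularity inputs by arguments valid in every dimension.

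\textbf{Existence.} Exhaust $M$ by compact domains that, on $E$, are coordinate cylinders $C_R=\{|y|\le R,\ |z|\le R\}$. Truncate the arbitrary ends far away by large mean-convex (or cut-off) boundaries; geometric boundedness lets us do this with uniform control. In each domain we solve the Plateau problem for boundary $\{|y|=R,\ z=t\}$ in the class of integral currents, or equivalently boundaries of Caccioppoli sets. This gives area-minimizing $\Sigma_{t,R}$ with singular sets of codimension at least $8$ (Federer). The barriers are the slabs $\{|z-t|\le h(|y|)\}$ with $h\sim |y|^{1-\tau+\epsilon}$. These are mean-convex in the AF region for $\tau>n/2$, and the maximum principle keeps $\Sigma_{t,R}\cap E$ inside the slab outside a fixed compact set. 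Comparison with the flat disk plus a geometric-boundedness estimate on the other ends gives local area bounds. Federer--Fleming compactness then yields a limit $\Sigma_t$ as $R\to\infty$. The slab barrier guarantees that $\Sigma_t$ is nonempty and asymptotic to $\{z=t\}$.

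\textbf{Graphicality and decay.} In $E$, far out, a blow-down of $\Sigma_t$ is an area-minimizing cone contained in a slab. Such a cone must be the hyperplane $\{z=0\}$ with multiplicity one; multiplicity one follows from the boundary-type minimization, since $\Sigma_t$ bounds a region. Allard's regularity theorem, applied on annuli $\{|y|\sim r\}$ at scale $r$, gives small tilt-excess and hence a graph representation with small Lipschitz constant outside a compact $\mathbf{K}$. The key point is that $\mathbf{K}$ is independent of $t$. We argue by contradiction: a sequence $t_i$ with bad points $p_i\to\infty$ is translated in $z$, and the rescaled limit is an area-minimizing boundary in $\mathbf{R}^{n+1}$ in a slab, hence a hyperplane. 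This forces graphicality and contradicts the choice of $p_i$. In particular there are no singular points in $E\setminus\mathbf{K}$. The graph $u_t$ then satisfies the minimal surface equation with coefficients $g=\delta+O(|x|^{-\tau})$. Starting from the slab bound $|u_t-t|\le C|y|^{1-\tau+\epsilon}$, Schauder estimates on dyadic annuli give the derivative bounds in (2) for all $k$.

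\textbf{Uniqueness and foliation.} For $|t|>T$ every $\Sigma_t$ lies in $E$ near infinity, where it is entirely a graph over $\mathbf{R}^n$ outside $\mathbf{D}$. We show that for large $|t|$ the slab barrier already confines $\Sigma_t$ to the region $\{|z|>T/2\}$, which by AF is a graph region; this gives smoothness. For uniqueness, suppose $\Sigma_t,\Sigma'_t$ are two such surfaces. Their difference $w=u-u'$ satisfies a linear elliptic equation and decays like $|y|^{1-\tau+\epsilon}$. For $t\ne s$ the surfaces $\Sigma_t,\Sigma_s$ are disjoint and ordered, by the cut-and-paste argument for minimizing boundaries together with the strong maximum principle. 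A limit argument as $s\to t$ then shows $\Sigma_t$ is squeezed between the limits $\Sigma_{t^\pm}$, and these have equal asymptotics. To rule out a gap, take the normalized difference $(u_s-u_t)/(s-t)$. It converges to a positive Jacobi field $f$ tending to $1$ at infinity, and the strict stability forced by AF decay gives $f>0$. So $t\mapsto u_t$ is $C^1$ with nonvanishing speed, which yields the $C^1$ foliation above $\Sigma_T$. A nonzero $w$ changing sign would contradict the maximum principle for that equation, so the solution is unique.

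\textbf{Without geometric boundedness.} All of the above localizes to the region $\{|z|>T_0\}\subset E$: for $|t|>T_0$ we construct $\Sigma_t$ by minimizing in a half-space region bounded by $\Sigma$-barriers, so the other ends are never touched.

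\textbf{Main obstacle.} We expect the hardest step to be the $t$-uniform compact set $\mathbf{K}$ together with multiplicity one and regularity in $E$ in high dimensions. Here the blow-down classification and the translation-compactness argument must replace the dimension-$\le 7$ curvature estimates. The mass-control of areas on arbitrary ends is also delicate.
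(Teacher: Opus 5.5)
Your compactness route to regularity and to a $t$-independent $\mathbf{K}$ is a legitimate, softer alternative to the paper's density bound of Proposition~\ref{prop: density estimate1}. You argue that translated or rescaled limits are minimizing boundaries confined to a hyperplane, hence multiplicity-one planes, and then apply Allard. The paper instead uses global monotonicity with AF error terms, almost-monotonicity across the core, comparison with the flat disk, Allard via a Nash embedding, and point-picking for $|A|\le C/|x|$. Reading off $|u_t-t|\le C|y|^{1-\tau+\epsilon}$ from barriers and then using Schauder on dyadic annuli is also fine, once the barriers are the right ones (see below).

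\textbf{Uniqueness and foliation.} The main gap is in part (3). The difference $w=u-u'$ of two graphs solves $a^{ij}D_{ij}w+b^iD_iw+cw=0$. Here $c=O(|x|^{-2-\tau})$ comes from the $z$-dependence of $g$ and has no fixed sign, so there is no maximum principle to invoke. Moreover, the one-signed case $w>0$ is not addressed at all. That case is two disjoint, ordered minimizers with the same asymptotics, which is exactly what your squeezing $\Sigma_{t^-}\le\Sigma_t\le\Sigma_{t^+}$ produces.

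The difference quotient cannot ``rule out a gap'' either. If $\Sigma_{t^-}\neq\Sigma_{t^+}$, then $(u_s-u_t)/(s-t)$ blows up. Even granting continuity, showing the limit tends to $1$ at infinity requires $|u_s-u_t-(s-t)|\le C|s-t|\,|y|^{1-\tau+\epsilon}$ uniformly, i.e.\ invertibility of the linearized operator. Positivity of $f$ is then automatic by Harnack, so that is not where strict stability enters.

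What is missing is coercivity for $|t|$ large. Because $\tau>n/2$, you have $w\in\dot{H}^1(\mathbf{R}^n)$ and $w/|y|\in L^2$, and the boundary terms at infinity vanish. Test the equation with $w$, use $|x|\ge c(|y|+|t|)$ on the graphs, and apply Hardy's inequality. This gives $\int|\nabla w|^2\le C|t|^{-\tau}\int|\nabla w|^2$, whence $w=0$. The same estimate in weighted spaces produces the Jacobi field and the $C^1$ dependence. The paper does not redo this step; it imports uniqueness and the $C^1$ foliation from \cite{HSY24} (Propositions 2.13--2.14).

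\textbf{Height control near the axis.} You need this for the ``slab'' in your compactness argument and for the claim $\Sigma_t\subset\{|z|>T/2\}$ when $|t|>T$. A power-law slab $\{|z-t|\le h(|y|)\}$ restricted to $|y|\ge R_0$ has an inner boundary on which you have no data, so the maximum principle yields nothing. The paper uses the Schoen--Yau catenoidal graphs $\ggraph_{\pm f+t}$, which turn vertical at finite radius and can be slid in from infinity. These only control $\Sigma_t$ outside a cylinder $C_\Lambda$, and nothing in your proposal prevents a long finger of $\Sigma_t$ inside $C_\Lambda$ from reaching the core.

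The paper excludes such fingers by playing the monotonicity lower bound (area $\gtrsim$ vertical extent) against the comparison upper bound (area in $C_\Lambda\lesssim\Lambda^n$). Under bounded geometry this places $\Sigma_t\cap C_\Lambda$ between $S_{t\pm C_1\Lambda}$ (Corollary~\ref{cor: bound ms}). Without bounded geometry, for $|t|$ large, it shows the obstacle solution in $E$ never touches $\partial E$ (Remark~\ref{rmk: no drift to infinity for arbitrary ends}).

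Your ``half-space region bounded by $\Sigma$-barriers'' does not replace this. Coordinate hyperplanes have mean curvature $O(|x|^{-1-\tau})$ of no sign, so there is no evident mean-convex region. You also give no reason the minimizer stays off the obstacle.
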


	\begin{remark}\
	\begin{itemize}
		\item The existence of area-minimizing hypersurface $\Sigma_t$ for each $t$ was known by Schoen-Yau in \cite{SY79PNAS};
		\item Let $(M^{n+1},g)$ be as in Theorem \ref{thm: foliation} and $\Sigma'_t$ be any area-minimizing hypersurface asymptotic  the coordinate hyperplane \{z=t\} for some $|t|\leq T$. Then $\Sigma'_t$ may contain singular set $\mathcal{S}$. However, we are able to show that $\mathcal{S}$ is contained in a  fixed compact set $\mathbf{K}$ of $M^{n+1}$ (see Corollary \ref{cor: bound ms} below) and $\Sigma'_t$ has the same decay property as (2).  More generally, if $M$ is an  AF manifold with some arbitrary ends  but without geometric bounded condition, let $\Sigma'_t$ be an area-minimizing hypersurface in $M$ that is asymptotic to the hyperplane $\{z = t\}$, we can show that $\mathcal{S}\subset \mathbf{K}\cup (M\backslash E)$, where $\mathbf{K}\subset E$ is a compact set. In particular,  if $n=7$, by  geometric measure theory, we know that for any compact set $\mathbf{L}\subset M^{n+1}$, $\mathcal{S}\cap \mathbf{L}$ consists of finitely many points. 
        	\end{itemize}
\end    {remark}
		  It would be interesting to compare the progress in constant mean curvature(CMC) foliations with the area-minimizing foliation constructed here. In both settings the smooth foliation exists in all dimensions in the asymptotic region. For an AF manifold of dimension $n\geq3$ with
positive mass, Eichmair-Koerber \cite{EK24} established the existence of an
asymptotic foliation of $(M, g)$ by stable CMC spheres based on Lyapunov-Schmidt reduction. See also Eichmair-Metzger \cite{EM13} for the case of isoperimetric foliations in asymptotically Schwarzschild manifolds with positive mass.

In \cite{HSY24}, we showed that the existence and behavior of a certain class of area-minimizing hypersurfaces within an exhaustive sequence of coordinate cylinders \footnote{A coordinate cylinder of radius $R$ in an AF end is given by $C_R=\{ (x_1,x_2,\dots,x_n,z)\in \mathbf{R}^{n+1}, x_1^2+x_2^2+\dots+x_n^2< R^2\}.$} in an AF manifold of dimension less than or equal to $7$ and with non-negative scalar curvature is heavily dependent on the  mass of the AF manifold (cf. Theorem 1.3 in \cite{HSY24}).	Such a phenomenon can be regarded as an effective version of positive mass theorem (PMT) of the AF manifold, by which one get  PMT directly.   In this paper, we continue this investigation in higher dimensional  AF manifolds $(M^{n+1},g)$, possibly with arbitrary ends.
	
	\begin{theorem}\label{thm: 8dim Schoen conj}
		Let $(M^{n+1},g)$ be an AF manifold with arbitrary ends and  a distinguished AF end  $E$ of asymptotic order $\tau>max \{\frac{n}{2}, n-2\}$ $(n+1\le 8)$. Suppose the scalar  curvature $R_g\geq 0 $ on $M^{n+1}$,  and  $m(M, g, E) \neq 0$.

        Furthermore, assume that $(M^{n+1},g)$ falls into one of the following two cases:
\begin{enumerate}
  \item $M$ has only AF ends.
  \item $M$ may have arbitrary ends other than $E$. Moreover, there exist two neighborhoods $U_1$ and $U_2$ of $E$ with
  \[
  E \subset U_1 \subset U_2,
  \]
  such that $\overline{U_i\setminus E}$ is compact and $\partial U_i$ is smooth for $i=1,2$, and
  \[
  R_g>0 \quad \text{in } U_2\setminus U_1.
  \]
\end{enumerate}
        
        Then one of the following happens:
		\begin{itemize}
			\item There exists $R_0>0$ such that for all $R>R_0$ there exists no element $\Sigma_R$, which minimizes the volume in the  $\mathcal{F}_R$ (see \eqref{eq: 79}  for detailed definitions);
			\item For any sequence $\{R_i\}$ tending to infinity such that there exists a sequence of hypersurfaces $\Sigma_{R_i}$  minimizing the volume in $\mathcal{F}_{R_i}$, we have that $\Sigma_{R_i}$ drifts  to the infinity,i.e. for any compact set $\Omega \subset M$, 	$\Sigma_{R_i}\cap \Omega= \emptyset$ for sufficiently large $i$.
		\end{itemize}
	\end{theorem}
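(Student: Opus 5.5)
We argue by contradiction, following the strategy of \cite{HSY24} (Theorem 1.3 there). Assume both alternatives fail. Then there is a sequence $R_i\to\infty$ and minimizers $\Sigma_{R_i}$ of the volume in $\mathcal{F}_{R_i}$ (free-boundary hypersurfaces in the coordinate cylinder $C_{R_i}$, in the class defined in \eqref{eq: 79}), together with a fixed compact set $\Omega\subset M$ such that $\Sigma_{R_i}\cap\Omega\neq\emptyset$ for all $i$.

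\textbf{Step 1: pass to a limit.} Using the comparison with coordinate slabs, the $\Sigma_{R_i}$ have uniform local area bounds. The compactness theorem for area-minimizing boundaries then gives a subsequence converging locally in the flat and varifold sense to a complete area-minimizing hypersurface $\Sigma_\infty$ in $M$, and $\Sigma_\infty$ meets $\Omega$. Since $n+1\le 8$, the singular set of $\Sigma_\infty$ is at most discrete, and it is empty if $n+1\le7$. Next we need the height of $\Sigma_{R_i}$ over the AF end to stay bounded near the fixed compact set. This should follow from the barrier and decay estimates behind Theorem \ref{thm: foliation}: the leaves $\Sigma_{\pm T}$ of the foliation serve as barriers. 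Using them and the maximum principle, together with the graphical estimates in part (2) of Theorem \ref{thm: foliation}, we expect $\Sigma_\infty\cap E$ to be a graph over $\{z=t\}$ outside a compact set for some $t$, with decay $|u-t|\le C|y|^{1-\tau+\epsilon}$.

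\textbf{Step 2: show $\Sigma_\infty$ is totally geodesic and the ambient region is flat.} The minimizing property yields stability. With $R_g\ge0$, the stability inequality tested against a cutoff function gives, after the Schoen--Yau rearrangement,
\[
\int_{\Sigma_\infty}\bigl(|A|^2+\operatorname{Ric}(\nu,\nu)\bigr)\phi^2\le\int_{\Sigma_\infty}|\nabla\phi|^2 .
\]
Because $\tau>n-2$, the area growth of $\Sigma_\infty$ is Euclidean. Combined with the decay from Step 1, the log-cutoff argument of \cite{HSY24} should force $A\equiv0$ and $R_g\equiv0$ along $\Sigma_\infty$. The induced metric should also be scalar-flat and AF, with mass expressible via $m(M,g,E)$. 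From here the plan is to show that the foliation from Theorem \ref{thm: foliation} near $\Sigma_\infty$ consists of minimal leaves. Then $(M,g)$ would be flat on $E$, giving $m=0$, which contradicts $m(M,g,E)\neq0$.

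\textbf{Step 3: rule out the other ends.} In case (1) the other ends are AF, so the positive mass theorem for AF manifolds with arbitrary ends applies and pins down $m=0$ after a conformal deformation. In case (2) we use $R_g>0$ on $U_2\setminus U_1$. The limit $\Sigma_\infty$ must then avoid $U_2\setminus U_1$, since $R_g>0$ there contradicts the equality $R_g\equiv0$ along $\Sigma_\infty$ obtained in Step 2. Hence $\Sigma_\infty\subset U_1$, i.e. it stays in $E$ together with a compact piece, and the argument reduces to the AF case.

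\textbf{Main obstacle.} The hardest step is the rigidity argument in the singular 8-dimensional case. Isolated singular points of $\Sigma_\infty$ must be handled in the stability and cutoff arguments. We would try capacity-zero cutoffs around the singular points, which work because $n\ge 3$, and then invoke the PMT with arbitrary ends on the conformally modified singular slice, following the dimension-reduction approach.
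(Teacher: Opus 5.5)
There is a genuine gap in Step 2, which is where the real content of the theorem lies. Ordinary stability tested with a logarithmic cutoff forces $A\equiv0$ only when $n=2$. For the log cutoff between radii $R$ and $R^2$ on an $n$-dimensional hypersurface with Euclidean area growth, $\int|\nabla\phi|^2\sim R^{2(n-2)}/\log^2R\to\infty$ once $n\ge3$, so Euclidean growth is exactly what makes your argument fail.

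What replaces it in higher dimensions is \emph{strong stability} (Definition \ref{defn: strongly stable hypersurface}): the stability inequality for test functions tending to $1$ at infinity. The limit $\Sigma$ has this property because the $\Sigma_{R_i}$ minimize in a free-boundary class that admits vertical translations (Schoen's computation, \cite[Lemma 3.6]{HSY24}). Strong stability is then turned into rigidity, in the smooth case via a conformal change and the positive mass theorem on the slice. With isolated singularities and arbitrary ends this is precisely Theorem \ref{prop: rigidity for minimal surface} from \cite{HSY26}, which the paper uses as a black box.

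Part (2) of that theorem is also what confines $\Sigma$ to $U_1$ in case (2). Your Step 3 argument for this is circular: it presupposes $R_g\equiv0$ along all of $\Sigma_\infty$, which cannot be obtained before controlling $\Sigma_\infty$ outside $E$. In case (1), the positive mass theorem cannot ``pin down $m=0$''; it only gives $m\ge0$.

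Your Step 1 is also unjustified. $\partial\Sigma_{R_i}$ sits on $\partial C_{R_i}$ at uncontrolled heights, so fixed leaves $\Sigma_{\pm T}$ give no maximum-principle bound and cannot exclude a tilted limit. The paper instead uses the height bound $Z(\Sigma_i)=o(R_i)$, reruns the density and curvature estimates of Section 2 to get $|A|\le C/|x|$, and follows Eichmair--Koerber; the bound $Z(\Sigma_i)=o(R_i)$ is what rules out the tilt.

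Second, even granting that $\Sigma_\infty$ is flat and totally geodesic, one rigid slice does not give $m=0$. Your plan to show that ``the foliation near $\Sigma_\infty$ consists of minimal leaves'' has no mechanism. The leaves of Theorem \ref{thm: foliation} are already area-minimizing, and they are only known to exist for $|t|\ge T_0$. What is actually needed is that each leaf is strongly stable. The paper gets this as follows:
\begin{enumerate}
\item Rigidity of $\Sigma$ together with Allard's theorem makes $\Sigma_i$ smooth on compacta.
\item For $p\in E$ above $\Sigma_{T_0}$, deform $g$ to $g(s)<g$ on a relatively compact $W$ meeting $\Sigma$, with $R_{g(s)}>0$ on $W\setminus B_r(p)$ and $\Sigma_i$ mean convex.
\item Minimize in the class $\mathcal{G}_{R_i}$ of \eqref{eq:Gr} above $\Sigma_i$. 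The minimizers meet a fixed compact set, and their limit $\Sigma(s)$ is strongly stable.
\item Rigidity then forces $\Sigma(s)\cap B_r(p)\neq\emptyset$. Letting $s,r\to0$ gives a strongly stable minimizer through $p$, which by the uniqueness in Theorem \ref{thm: foliation}(3) is a leaf $\Sigma_t$.
\item Hence every leaf with $|t|\ge T_0$ is flat and totally geodesic with $R_g=\mathrm{Ric}(\nu,\nu)=0$. By \cite[Proposition A.3]{HSY24}, the regions above $\Sigma_{T_0}$ and below $\Sigma_{-T_0}$ are isometric to half-spaces, so $m(M,g,E)=0$.
\end{enumerate}
This ``strongly stable minimizer through every point'' step, not the use of leaves as barriers, is the actual role of the foliation theorem, and it is missing from your proposal.
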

	\begin{remark}
			 \item If $(M^{n+1},g)$  admits  only AF ends (the case (1) in Theorem \ref{thm: 8dim Schoen conj}), Theorem \ref{thm: 8dim Schoen conj} can be deduced  from Theorem 1.6  of \cite{CCE16}  for AF ends of asymptotic order $\tau>\frac{1}{2}$ when $n=2$ and Theorem 2 of \cite{Carlotto16} for  asymptotically Schwarzschild manifolds  when $3 \leq n \leq 6$. 
	\end{remark}

	Theorem \ref{thm: 8dim Schoen conj} reflects certain global effects of the mass for AF manifolds with nonnegative scalar curvature and   arbitrary ends. We would like to emphasize that, even when all ends are AF, an effective form of the positive mass theorem for AF manifolds (as stated in Theorem \ref{thm: 8dim Schoen conj}) appears to be new in dimensions greater than 7. The sequence of minimal hypersurfaces provides a quantitative characterization of the positivity of the mass. In the case that $(M^{n+1},g)$ possesses arbitrary ends, Theorem \ref{thm: 8dim Schoen conj} (2) can be regarded as a \textit{ shielded phenomenon for free boundary minimizing hypersurfaces}. This could be compared with the shielded version of the positive mass theorem proved in \cite{LUY21}.

    \begin{figure}
            \centering
            \includegraphics[width=15cm]{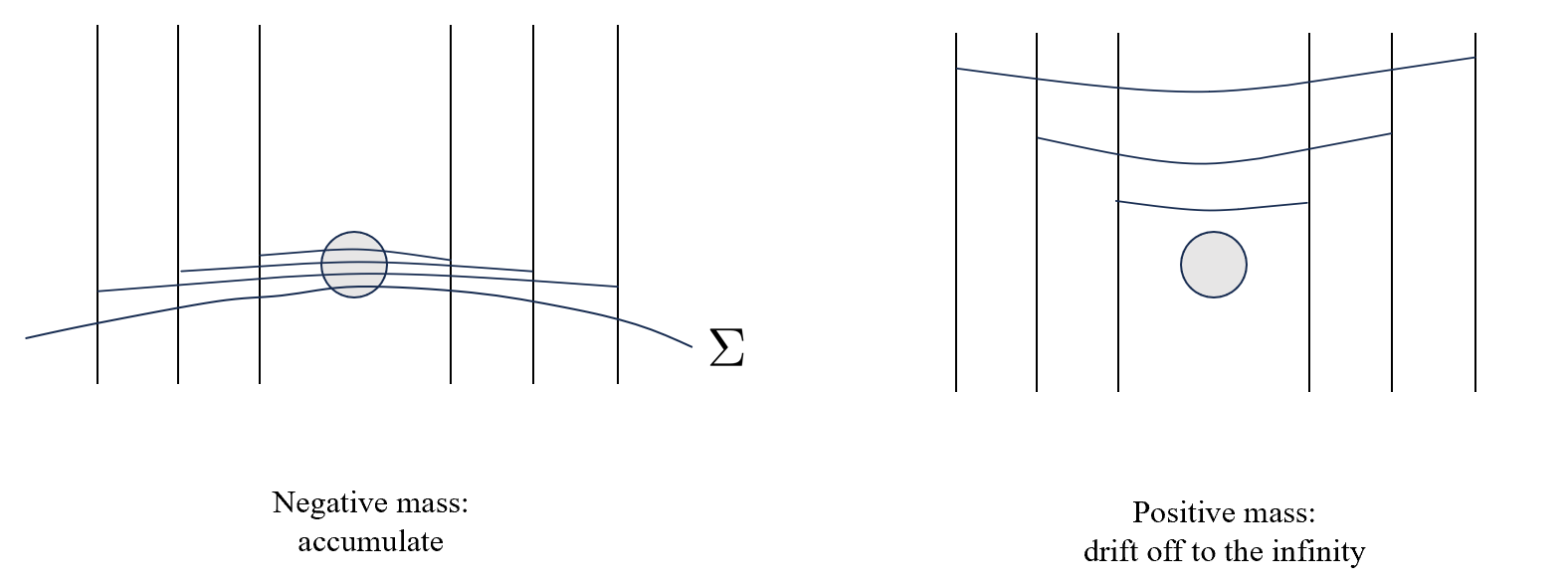}
            \caption{An illustration of Theorem \ref{thm: 8dim Schoen conj}: Vertical pairs of lines represent coordinate cylinders, while horizontal curves represent free boundary minimizing hypersurfaces.}
            \label{c2}
        \end{figure}
	
\subsection{Outline of Proofs}

We focus on area-minimizing boundaries in AF manifolds. Following  \cite{HSY24}, we select a sequence of exhausting coordinated cylinder $C_{r_i}\subset C_{r_{i+1}}$  in the AF end of  $(M^{n+1},g)$, and solve the Plateau's problem in $C_{r_i}$ with  an $(n-1)$-dimensional coordinated sphere within $C_{r_i}$ as boundary datum. Here $r_i$ denotes the coordinated  radius of $C_{r_i}$ and $r_i\rightarrow \infty$ as $i\rightarrow \infty$. Note that the solution to our Plateau's problem is  an area-minimizing hypersurface  that forms   boundary of a Caccioppoli set. Then by comparing with the coordinate ball enclosed by the $(n-1)$-dimensional coordinated sphere in $C_{r_i}$, we observe that  for $i$ large enough, the volume density of the solution of our Plateau's problem can be arbitrarily close to $1$ outside a fixed compact set of $M^{n+1}$ (see Proposition \ref{prop: density estimate1} below). Consequently, invoking of Allard's regularity theorem in geometric measure theory, we deduce that the singular set of $\Sigma_t$ in Theorem \ref{thm: foliation} is contained within a fixed compact set of $M^{n+1}$ (see Corollary \ref{cor: bound ms} below). The foliation structure of the AF end follows from the similar argument in \cite{HSY24}.

    For the  proof of Theorem \ref{thm: 8dim Schoen conj}, we adopt the contradiction argument.
    Suppose there exist a sequence of area minimizing boundary $\Sigma_i$ in free boundary sense
    such that $\Sigma_i\cap K\neq\emptyset$ for some fixed compact $K$. Then $\Sigma_i$ converge to a strongly stable area minimizing hypersurface. We can construct  a strongly stable area minimizing boundary $\Sigma_p$ passing through $p$ for any $p\in E$ with $|z(p)|\gg1$ using the method of \cite{Liu13,CCE16}. In particular, $\Sigma_p\cap E$ is asymptotic to some coordinate hyperplane.  By combining Theorem 1.1 in \cite{HSY26} with Theorem \ref{thm: foliation}, we show the mass of the AF end $E$ of $(M,g)$ equals $0$, which gives a contradiction.

	\subsection{Organization of The Paper}
	In Section 2, we introduce some key definitions and preliminary estimates for area-minimizing hypersurfaces, and then prove Theorem \ref{thm: foliation} at the end of this section.  Section 3 is devoted to the proof of Theorem \ref{thm: 8dim Schoen conj}.

\section{Proof of Theorem \ref{thm: foliation}}

In this section, we consider some basic estimates for area minimizing hypersurfaces in the coordinate cylinder $C_r$ in an asymptotically flat (AF) manifold $(M^{n+1}, g)$ of dimension $n+1$.  Using these estimates, we provide a proof of Theorem \ref{thm: foliation} at the end of this section. 

Let us begin with the following definition of AF manifolds.

\begin{definition}\label{def: AF manifold}
		An end $E$ of an $(n+1)$-dimensional Riemannian manifold $(M^{n+1},g)$ is said to be asymptotically flat (AF) of order $\tau$ for some $\tau>\frac{n-1}{2}$, if $E$ is diffeomorphic to $\mathbf{R}^n\backslash B^{n+1}_{1}(O)$, and the metric $g$ in $E$ satisfies
		\begin{align*}
			|g_{ij}-\delta_{ij}|+|x||\partial g_{ij}|+|x|^2|\partial^2 g_{ij}| = O(|x|^{-\tau}),
		\end{align*}
		and $R_g\in L^1(E)$. The  mass of $(M,g,E)$ is defined by
            \begin{align*}
                m_{ADM}(M,g,E) = \frac{1}{2n\omega_{n}}\lim_{\rho\to\infty}\int_{S^{n}(\rho)}(\partial_ig_{ij}-\partial_j g_{ii})\nu^jd\sigma_x
            \end{align*}
  \end{definition}

    In the case above we always call $(M,g)$ an asymptotically flat manifold: $(M,g)$ has an distinguished AF end $E$ and some arbitrary ends. Throughout the paper, we always assume
    the AF end of a  Riemannian manifold $(M^{n+1},g)$  is of order $\tau$ for some $\frac{n-1}{2}<\tau\leq n-1$.

Let us collect the notations that will be used throughout our discussion:
\begin{itemize}
    \item $B_r^{n+1}(p)$ (or simply $B_r(p)$) denotes the coordinate ball $\{x\in \mathbf{R}^{n+1}: |x-p|<r\}$;
    \item $\mathcal{B}^{n+1}_r(p)$ denotes a geodesic ball of center $p$ and radius $r$ in the Riemannian manifold $M^{n+1}$;
    \item $(y,z),y\in\mathbf{R}^n,z\in\mathbf{R}$  denotes points in the AF end $E\cong \mathbf{R}^{n+1}\backslash B_1^{n+1}(O)$;
    \item $S_t$ denotes the coordinate $z$-hyperplane $\{(y,z): z = t\}$ in $E$;
    \item  Within the AF end $E$,  the hypersurface $(\partial B^{n+1}_R(O) \cap S_0) \times \mathbf{R}$ partitions $E$ into two distinct regions: an inner part and an outer part. Let $C_r$ be the  inner part, which we call the \textit{cylinder of radius} $r$.
\end{itemize}

To better illustrate the ideas, throughout this section our AF manifold $M^{n+1}$ is assumed to have a single end, and at the end of this section we will explain how to extend the results to AF manifolds with arbitrary ends by using exactly the same idea; See also Remark \ref{rmk: no drift to infinity for arbitrary ends}.

The rest of this section is organized as follows.
In Section 2.1, we prove the existence of a solution $\Sigma_{a,t}$ to the Plateau problem with boundary $S_t\cap \partial C_a$.
In Section 2.2, we establish a density estimate for $\Sigma_{a,t}$ outside a compact set $B_{a_0}(O)$; see Proposition \ref{prop: density estimate1}.
In Section 2.3, we first prove the smoothness of $\Sigma_{a,t}$ outside a compact set; see Corollary \ref{cor: smooth outside compact set}.
We then establish a curvature estimate for $\Sigma_{a,t}$; see Lemma \ref{lem: uniform estimate for A}.
Finally, in Section 2.4 we prove Theorem \ref{thm: foliation}.

\subsection{Plateau problem in the cylinder $C_a$}

$\quad$

We aim to solve the Plateau problem in the cylinder $C_a$ with prescribed boundary 
\[
S_{a,t}:=\partial C_a\cap S_t .
\]
We reformulate the problem in the language of geometric measure theory.

\begin{proposition}\label{prop: solving plateau}
    Let $\Omega_{a,t}$ be a Caccioppoli set such that $\Omega_{a,t}\cap (E\backslash C_a) = \{z\le t\}\cap (E\backslash C_a)$. Then for $a$ sufficiently large, there exists a Caccioppoli set $E_{a,t}$, such that
    \begin{align}\label{eq: perimeter minimizing}
        P(E_{a,t}, C_a)\le P(F,C_a)
    \end{align}
    for all Caccioppoli set $F$ satisfying $F = \Omega_{a,t}$ in $E\backslash C_a$. Here $P(F,C_a)$ denotes the perimeter of $F$ in $C_a$.

    Furthermore, let $\Sigma_{a,t} = \partial^*E_{a,t}\cap C_a$, where $\partial^*E_{a,t}$ is the reduced boundary of $E_{a,t}$. Then $\Sigma_{a,t}$ has a multiplicity one integral current structure as well as a multiplicity one rectifiable varifold structure, satisfying $\Sigma_{a,t}\cap \partial C_a = S_{a,t}$ and $\partial[|\Sigma_{a,t}|] = [|S_{a,t}|]$. 
\end{proposition}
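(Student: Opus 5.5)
We prove existence by the direct method in $BV$ and then read off the boundary structure from the structure theory of perimeter minimizers.

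\textbf{Step 1: a finite competitor.} We first note that the admissible class is nonempty with finite energy. The set $\Omega_{a,t}$ itself, or the explicit set $\{z\le t\}$ modified inside $C_a$, is admissible. Its perimeter in $C_a$ is finite, since $C_a$ has compact closure in the region where it matters. Indeed, $C_a\cap\{|z|\le L\}$ is bounded, and far up or down the competitor is trivially empty or full.

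\textbf{Step 2: no mass escapes vertically.} A subtlety is that $C_a$ is unbounded in $z$. We handle this by truncation. For $a$ large, the metric on $E$ is uniformly equivalent to the Euclidean one. Consider the projection that replaces $F$ by $F$ on $\{|z|\le L\}$, by the full set on $\{z<-L\}$, and by the empty set on $\{z>L\}$, with $L>|t|$ large. A calibration argument shows this does not increase perimeter up to a small error. Concretely, use the divergence of the unit vector field $\partial_z/|\partial_z|_g$, which is $O(|x|^{-1-\tau})$. Alternatively, compare directly with horizontal slices. Hence a minimizing sequence can be taken with uniformly bounded perimeter in $C_a$ and supported, modulo the fixed pattern, in a bounded region.

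\textbf{Step 3: compactness and lower semicontinuity.} BV compactness on the bounded region $\overline{C_a}\cap\{|z|\le L\}$ then yields an $L^1_{loc}$ limit $E_{a,t}$. The limit still coincides with $\Omega_{a,t}$ outside $C_a$. Lower semicontinuity of the perimeter in open sets gives \eqref{eq: perimeter minimizing}. Here we use that the perimeter of the constrained sets in $M$ near $\partial C_a$ is controlled. Equivalently, we minimize $P(F,U)$ for an open $U\supset\overline{C_a}$, where the part in $U\setminus\overline{C_a}$ is fixed, and then apply lower semicontinuity in $U$.

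\textbf{Step 4: current and varifold structure.} By De Giorgi's structure theorem, $\partial^*E_{a,t}$ is $n$-rectifiable with $|D\chi_{E_{a,t}}|=\mathcal{H}^n\llcorner\partial^*E_{a,t}$. This gives $\Sigma_{a,t}$ the structure of a multiplicity-one integral current, namely $\partial[|E_{a,t}|]\llcorner C_a$ oriented by the measure-theoretic normal, together with the associated multiplicity-one varifold.

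\textbf{Step 5: the boundary identity.} Write $\partial[|E_{a,t}|]=[|\Sigma_{a,t}|]+[|\{z=t\}\setminus C_a|]$ up to orientation; note that $\partial C_a$ itself carries no jump. Since $\partial\partial=0$, we get $\partial[|\Sigma_{a,t}|]=-\partial[|S_t\setminus C_a|]=[|S_{a,t}|]$ with the correct orientation.

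\textbf{Step 6: the trace, $\Sigma_{a,t}\cap\partial C_a=S_{a,t}$.} This is the main obstacle: we must rule out the minimizer sticking to the wall $\partial C_a$ away from $S_{a,t}$. For $a$ large, $\partial C_a$ is strictly mean convex with respect to the inward normal, with mean curvature about $(n-1)/a$ and error $O(a^{-1-\tau})$. A maximum-principle / barrier argument then applies. The first variation of $E_{a,t}$ under inward deformations supported near $\partial C_a\setminus S_{a,t}$ would strictly decrease perimeter if $\partial^*E_{a,t}$ had positive measure on $\partial C_a$ there. We can also use the horizontal slab structure: cutting $E_{a,t}$ by $\{z\le t\}$ near the wall reduces perimeter. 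Together these show that the support of $\Sigma_{a,t}$ meets $\partial C_a$ only along $S_{a,t}$. This is also the step where "$a$ sufficiently large" is used.
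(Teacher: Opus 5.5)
Your proposal is correct and follows essentially the same route as the paper. The paper gets existence from Giusti's Theorem 1.20, which is your direct method, and you correctly pose it on an open $U\supset\overline{C_a}$; it gets the current and varifold structure from De Giorgi's structure theorem, and $\Sigma_{a,t}\cap\partial C_a=S_{a,t}$ from the mean convexity of $\partial C_s$ for large $s$ plus the maximum principle. There are only two minor differences. First, you obtain $\partial[|\Sigma_{a,t}|]=[|S_{a,t}|]$ from $\partial\partial[|E_{a,t}|]=0$ rather than from support considerations and the constancy theorem; this fixes the multiplicity directly but uses the no-jump property $\mathcal{H}^n(\partial^*E_{a,t}\cap\partial C_a)=0$, so your Step 6 must logically precede Step 5. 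Second, your truncation in Step 2 is unnecessary: $L^1_{loc}$ compactness on bounded sets plus lower semicontinuity of $P(\cdot,U)$ already handles the unboundedness of $C_a$ in $z$, whereas a merely small error bound would not by itself confine a minimizing sequence to a fixed bounded region.
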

\begin{proof}
    The Existence of $E_{a,t}$ is a direct consequence of \cite[Theorem 1.20]{Giu1984}. The current and varifold structure of $\Sigma_{a,t}$ is a consequence of De Giorgi structure theorem (See for example \cite[Theorem 14.3]{Simon83}). To see $\Sigma_{a,t}\cap \partial C_a = S_{a,t}$, we note from the asymptotical flatness of $E$ that $\partial C_s (s\ge a)$ forms a mean convex foliation as long as $a$ is sufficiently large. Thus, this follows from the maximum principle. Finally, for $\omega\in \Omega^{n-1}_c(C_a)$, it holds $\partial[|\Sigma_{a,t}|](\omega) = [|\Sigma_{a,t}|](d\omega) = \partial [|E_{a,t}|](d\omega) = 0$, which implies $\spt(\partial[|\Sigma_{a,t}|])\subset E\backslash C_a$. Combined with $\Sigma_{a,t}\cap \partial C_a = S_t\cap C_a = S_{a,t}$ and the constancy theorem \cite[Theorem 26.27]{Simon83}, it follows that $\partial[|\Sigma_{a,t}|] = [|S_{a,t}|]$.
\end{proof}

\subsection{Density estimate for area-minimizing  hypersurfaces  in asymptotically flat manifolds}

$\quad$

In this subsection, we derive the density estimate for area-minimizing  hypersurfaces  in asymptotically flat manifolds. As the first step, We shall establish monotonicity inequalities for minimal hypersurfaces in both local and global cases.

$\quad$

\textbf{1.Local inequalities}

Choose some fixed $r_0>1$ (Recall "$1$" is the number appeared in Definition \ref{def: AF manifold}) such that the following holds
\begin{itemize}
    \item Outside $B^{n+1}_{r_0}(O)$, the injective radius of $(M^{n+1},g)$ is at least $1$, and the sectional curvature satisfies $|sec|\leq k_0$. 
\end{itemize}

\begin{lemma}\label{lm: monotonicity ineq2} 
 Let $\Sigma^{n}$
be an area-minimizing hypersurface in $(M^{n+1},g)$. 
Then for all $\xi\in  M$ satisfying 
$\mathcal{B}^{n+1}_1(\xi)\subset M^{n+1}\setminus B^{n+1}_{r_0}(O) $, there holds
\begin{equation}\label{eq: monotonicity ineq2}
    e^{k_0\rho_2}\rho_2^{-n}\mathcal{H}^{n}(\mathcal{B}^{n+1}_{\rho_2}(\xi)\cap \Sigma)
    \geq e^{k_0\rho_1}\rho_1^{-n}\mathcal{H}^{n}(\mathcal{B}^{n+1}_{\rho_1}(\xi)\cap \Sigma)
\end{equation}
for any $0<\rho_1<\rho_2<1$. Here $\mathcal{B}^{n+1}_{\rho}(\xi)$ denotes the geodesic ball in $(M^{n+1},g)$.
\end{lemma}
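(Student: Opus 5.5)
Our plan is to derive this from the first variation formula, using a vector field built from the distance function to $\xi$ and controlling the error through Hessian comparison. Let $r=d_g(\xi,\cdot)$. Since $\mathcal{B}^{n+1}_1(\xi)\subset M\setminus B^{n+1}_{r_0}(O)$, the injectivity radius there is at least $1$ and $|sec|\le k_0$. Hence $r$ is smooth on $\mathcal{B}_1(\xi)\setminus\{\xi\}$, and $\frac12 r^2$ is smooth on the whole ball. Hessian comparison then gives, for $0<r<1$,
$$\Big|\nabla^2\big(\tfrac12 r^2\big)-g\Big|\le C k_0 r^2 ,$$
and in the form needed we aim for $\operatorname{div}_\Sigma(r\nabla r)\ge n(1-k_0 r)$, which holds with the constant arranged as $\sqrt{k_0}\,r\cot(\sqrt{k_0}\,r)\ge 1-k_0 r$ for $r<1$, after normalizing or adjusting $k_0$ if necessary.

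An area-minimizing hypersurface (in the sense of Proposition \ref{prop: solving plateau}, a multiplicity-one stationary varifold) is stationary away from its boundary. For $X=\varphi(r)\,r\nabla r$ with $\varphi$ a cutoff approximating $\chi_{[0,\rho)}$, we have
$$0=\int_\Sigma \operatorname{div}_\Sigma X .$$
Expanding gives
$$\operatorname{div}_\Sigma X=\varphi\,\operatorname{div}_\Sigma(r\nabla r)+r\varphi'(r)\,|\nabla^\Sigma r|^2 .$$
Letting $\varphi\to\chi_{[0,\rho)}$, this yields, for $V(\rho)=\mathcal{H}^n(\mathcal{B}_\rho(\xi)\cap\Sigma)$ and a.e. $\rho$,
$$n(1-k_0\rho)V(\rho)\le \rho\,V'(\rho).$$
Here we drop the nonnegative tangential term $\int|\nabla^\perp r|^2$, in the direction that gives an inequality. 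This is the standard argument (Simon's monotonicity), so the resulting inequality is
$$\frac{d}{d\rho}\log V\ge \frac n\rho - nk_0 .$$

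Integrating from $\rho_1$ to $\rho_2$ gives
$$\log V(\rho_2)-\log V(\rho_1)\ge n\log\frac{\rho_2}{\rho_1}-nk_0(\rho_2-\rho_1).$$
The statement has exponent $e^{k_0\rho}$ rather than $e^{nk_0\rho}$. So I will either put the dimension factor into the choice of $k_0$ (the curvature bound constant is ours to fix in the choice of $r_0$), or sharpen the comparison to $\operatorname{div}_\Sigma(r\nabla r)\ge n-k_0 r$. The latter holds because the deviation of each of the $n$ eigenvalues is $O(k_0r^2)$, so the total deviation is $\le n k_0 r^2/3$. This is $\le k_0 r$ once $r<1$ and $n\le 3$; otherwise we rescale $k_0$. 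Either way we conclude that $e^{k_0\rho}\rho^{-n}V(\rho)$ is nondecreasing on $(0,1)$.

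The main care point is this constant bookkeeping in the Hessian comparison. The other point to check is that $\partial\Sigma$ does not meet $\mathcal{B}_1(\xi)$, which is implicit in applying the lemma. Because $V$ is monotone it is differentiable a.e. with $V(\rho_2)-V(\rho_1)\ge\int V'$, so the integration is justified.
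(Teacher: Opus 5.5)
Your proposal is correct and is essentially the paper's argument: take $r=d_g(\xi,\cdot)$, use Hessian comparison to get $\operatorname{div}_\Sigma(r\nabla r)\ge n-O(r)$, then run the standard first-variation monotonicity computation. The paper's bound $|\operatorname{div}_\Sigma(\nabla r^2)-2n|\le 2n\sqrt{k_0}\,r$ likewise yields the weight $e^{n\sqrt{k_0}\rho}$ rather than $e^{k_0\rho}$, so both versions absorb a dimensional factor: enlarge only the exponent constant (harmless, since $e^{c\rho}f(\rho)$ stays nondecreasing for $c\ge 0$), keep the small true curvature bound inside the comparison, and drop your attempted sharpening, which has the inequality backwards ($1-x\cot x\ge x^2/3$).
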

\begin{proof}
    Let $r$ be the distance of $x$ to $\xi$  in $(M^{n+1},g)$. Then
    \begin{equation}
        div_{\Sigma}(\nabla^M r^2)=\sum_{i=1}^{n}2r\Hess^Mr(e_i,e_i)
        +2\langle \nabla^Mr, e_i\rangle^2.
    \end{equation}
    By Hessian comparison theorem (see \cite[p.234]{CM11}) we know in 
    $\mathcal{B}^{n+1}_1(\xi)$, for any vector  $V$
with  $|V|=1$, there holds
\[
| \Hess^M  r (V,V)-\frac{1}{r}(1-\langle V, \nabla^M r\rangle^2)|\leq\sqrt{k_0}.
\]
It follows
\[
|div_{\Sigma}(\nabla^M r^2)-2n|\leq2n\sqrt{k_0}r.
\]
The rest of the proof follows from the standard argument.
\end{proof}
\textbf{2. Global inequalities}

In order to get the regularity of area-minimizing hypersurfaces, we need a global monotonicity formula for volume density as well. 
 Let $\xi=(\xi^1, \cdots, \xi^{n+1})$ be a  point in the AF end $E$ with $|\xi|> R_0\geq r_0$.
 Let $\{x^i\}$($1\leq i\leq n+1$) be the coordinate function on the AF end $E$
and  $f^i$ be a smooth  extension of $x^i-\xi^i$ satisfying
\begin{equation}
f^i=\left\{
\begin{aligned}
&x^i-\xi^i, \quad \text{for $x\in E\setminus B^{n+1}_{R_0}(O)$},\\
&O(|\xi|),\quad \text{for $x\in M^{n+1}\backslash (E\setminus B^{n+1}_{R_0}(O))$}.
\end{aligned}
\right.\nonumber
\end{equation}
We define
$$
r^2=(f^1)^2+\cdots +(f^{n+1})^2.
$$
A suitable extension of $f$ in $M^{n+1}\backslash (E\setminus B^{n+1}_{R_0}(O))$ ensures  $r>0$ on $M^{n+1}\setminus \{\xi\}$.
We can fix some $R_0$ such that
$|\nabla_Mr|>0$ in $E\setminus (B^{n+1}_{R_0}(O)\cup\{\xi\})$.
Given $t$, we use
 $\Sigma_{a,t}$ to denote the solution of Plateau's problem in the coordinate cylinder $C_a$ with $\partial \Sigma_{a,t}= S_t \cap C_a$ given by Proposition \ref{prop: solving plateau}.
 For simplicity, we set 
 \[
 \theta_{\xi}(\rho,a)=\frac{\mathcal{H}^{n}(\Sigma_{a,t}\cap B^{n+1}_{\rho}(\xi))}{\omega_{n}\rho^{n}},
 \]
where $B^{n+1}_{\rho}(\xi):=\{x\in M^{n+1}: |x-\xi|<\rho\}$  denotes the coordinate ball with centre $\xi$ and radius $\rho$. 

We start with the following monotonicity formula:
\begin{lemma}\label{lmm: monotonicity ineq2}
For any $0<\rho_1<\rho_2$ with 
 $(B^{n+1}_{\rho_2}(\xi)\setminus B^{n+1}_{\rho_1}(\xi))\subset M\backslash B^{n+1}_{R_0}(O)$ and $B^{n+1}_{\rho_2}(\xi)\cap\partial\Sigma_{a,t}=\emptyset$,
it holds
\begin{equation}\label{eq: monotonicity formula}
	\begin{split}
	 \theta_{\xi}(\rho_2,a)- \theta_{\xi}(\rho_1,a)
    \geq
	\int^{\rho_2}_{\rho_1}s^{-n-1}\int_{\Sigma_{a,t}\cap B^{n+1}_{s}(\xi)}(u-n|v|)ds-\rho_2^{-n}\int_{\Sigma_{a,t}\cap  B^{n+1}_{\rho_2}(\xi)}|v|,
    \end{split}
\end{equation}
where $u,v$ satisfy
\begin{equation}
 u(x)=\left\{
\begin{aligned}
&O(|x|^{-1-\tau})|\xi|+O(|x|^{-\tau}), \quad \text{for $x\in M\setminus B^{n+1}_{R_0}(O)$, }\\
&O(1)|\xi|,\quad\ \quad \quad\quad\quad\quad\quad\ \ \  \ \text{for $x\in B^{n+1}_{R_0}(O)$. }
\end{aligned}
\right.\nonumber
\end{equation}
and
\begin{equation}
 v(x)=\left\{
\begin{aligned}
&O(|x|^{-\tau}), \quad\text{for $x\in M\setminus B^{n+1}_{R_0}(O)$, }\\
&O(1),\quad\ \quad \   \text{for $x\in B^{n+1}_{R_0}(O)$. }
\end{aligned}
\right.\nonumber
\end{equation}
\end{lemma}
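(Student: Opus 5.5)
We run the usual monotonicity argument, using the coordinate-type function $r$ in place of the intrinsic distance. The test vector field is $X=\frac12\nabla^M r^2$, cut off radially. Because $\Sigma_{a,t}$ is stationary away from its boundary and $B^{n+1}_{\rho_2}(\xi)\cap\partial\Sigma_{a,t}=\emptyset$, the first variation gives
\[
\int_{\Sigma_{a,t}}\mathrm{div}_\Sigma\bigl(\phi(r)X\bigr)=0
\]
for $\phi$ compactly supported in $[0,s)$, $s\le\rho_2$. We then let $\phi$ tend to $\chi_{[0,s)}$.

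\textbf{Step 1: compute the divergence of $X$.} Write $r^2=\sum_i (f^i)^2$. Then $\frac12\mathrm{div}_\Sigma\nabla^M r^2=\sum_i\bigl(|\nabla^\Sigma f^i|^2+f^i\Delta_\Sigma f^i\bigr)$.
\begin{itemize}
\item Outside $B^{n+1}_{R_0}(O)$ we have $f^i=x^i-\xi^i$. Then $\sum_i|\nabla^\Sigma x^i|^2_g=n+O(|x|^{-\tau})$, since $g^{ij}-\delta^{ij}=O(|x|^{-\tau})$.
\item For the Hessian term, $\Delta_\Sigma x^i=\mathrm{tr}_\Sigma\Hess^M x^i+H\langle\nu,\nabla x^i\rangle$ and $H=0$. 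Also $\Hess^M x^i=-\Gamma^i_{jk}=O(|x|^{-1-\tau})$, and $|f^i|\le|x-\xi|\le|x|+|\xi|$.
\item Hence $\frac12\mathrm{div}_\Sigma\nabla^M r^2=n+u$, with $u=O(|x|^{-1-\tau})|\xi|+O(|x|^{-\tau})$.
\item Inside $B_{R_0}(O)$ everything is bounded by $O(1)|\xi|$, because $f^i=O(|\xi|)$ there and its derivatives are bounded. This gives the stated form of $u$.
\end{itemize}

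\textbf{Step 2: compare $|\nabla^M r|$ with $1$.} Since $r\nabla^M r=\sum f^i\nabla^M f^i$, we get $|\nabla^M r|^2=1+v$ with $v$ of the stated size. Define $v$ to absorb both this defect and the defect $|\nabla^\Sigma r|^2\le|\nabla^M r|^2$.

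\textbf{Step 3: derive the differential inequality.} Apply the first variation with $\phi(r)$ and follow Simon's derivation. This gives
\[
\frac{d}{ds}\bigl(s^{-n}I(s)\bigr)\ge s^{-n-1}\int_{\Sigma\cap B_s}(u-n|v|)+\frac{d}{ds}\Bigl(\text{terms }s^{-n}\!\int_{\Sigma\cap B_s}|v|\Bigr).
\]
Here $I(s)=\int\phi(r/s)$ tends to $\mathcal{H}^n(\Sigma\cap B_s(\xi))$. Two facts are used in this derivation:
\begin{itemize}
\item The term $r\phi'\,|\nabla^\Sigma r|^2$ is compared with $r\phi'|\nabla^M r|^2=r\phi'(1+v)$. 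The error $v$ is handled by writing it as an $s$-derivative of $s^{-n}\int_{\Sigma\cap B_s}|v|$.
\item The factor $n$ multiplying $|v|$ comes from $-n s^{-n-1}I$ combined with $|\nabla r|^2$.
\end{itemize}
We discard the nonnegative normal term $|\nabla^\perp r|^2$, which is why we get an inequality rather than an identity. Integrating from $\rho_1$ to $\rho_2$ produces the boundary term $-\rho_2^{-n}\int|v|$ at the top endpoint. The corresponding term at $\rho_1$ has a favourable sign and is dropped.

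\textbf{Main obstacle.} The hard part is the bookkeeping of the error $v$ from $|\nabla r|\ne1$ while integrating in $s$. We want exactly the boundary term $-\rho_2^{-n}\int_{B_{\rho_2}}|v|$ and the bulk factor $n|v|$. The approach we would try first is to integrate the identity $\frac{d}{ds}\int_{B_s}r\phi'(r/s)|v|/s$ by parts in $s$, as in Simon's treatment of varifolds with bounded first variation, and keep only the signed terms. Note that the hypothesis on the annulus is not used to control $u$ or $v$, since inside $B_{R_0}$ these are simply absorbed as $O(1)|\xi|$ and $O(1)$. Its role is to guarantee $|\nabla r|>0$ where the coarea argument is applied.
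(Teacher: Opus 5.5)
Your proposal is correct and follows essentially the same route as the paper: Simon's Theorem 17.6 argument with the cut-off field $\phi(r/s)\,\tfrac12\nabla^M r^2$, the same error terms $u$ and $v=|\nabla^M r|^2-1$, and the nonnegative normal term $|(\nabla^M r)^{\perp}|^2$ discarded. Note that this normal defect is $O(1)$, so it must be dropped using $r\phi'\le 0$ as in your Step 3, not absorbed into $v$ as your Step 2 literally says. The $v$-bookkeeping you flag is done in the paper by first writing an exact identity and then using the coarea formula to combine the boundary terms into $\int_{\rho_1}^{\rho_2}s^{-n}\,d\bigl(\int_{\Sigma_{a,t}\cap B_s}(|(\nabla_M r)^{\perp}|^2-v)\bigr)$; this is the same as your replacing $-v$ by $-|v|$ at the level of the $s$-derivative, which makes the $\rho_1$ endpoint term $+\rho_1^{-n}\int_{\Sigma_{a,t}\cap B_{\rho_1}}|v|$ nonnegative so it can be dropped.
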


\begin{proof}
The arguments are almost the same as those of 	Theorem 17.6 in \cite{Simon83}.
We include a proof here for completeness. 
Let $\{\phi_k\}$ be a sequence of smooth and decreasing functions given by

\begin{equation}
\phi_k(t)=\left\{
\begin{aligned}
&1, \quad \text{for $t\leq \frac{k-1}{k}$, }\\
&0,\quad \text{for $t\geq 1$. }
\end{aligned}
\right.\nonumber
\end{equation}
For any $r\leq \rho$ we define
$$
\gamma(r):=\phi_k(\frac{r}{\rho}),\ \ 
I(\rho)=\int \phi_k(\frac{r}{\rho}) d\mu,
$$
and 
$$
J(\rho)=\int \phi_k(\frac{r}{\rho})|(\nabla_M r)^{\perp}|^2 d\mu,
$$
where $\mu$ is a Radon measure associated with $\Sigma_{a,t}$.
Namely, for any $\mathcal{H}^{n}$-measurable $A$, 
$$
\mu(A):=\mathcal{H}^{n}(\Sigma_{a,t}\cap A).
$$
By a direct computation, we have
$$
r\gamma'(r)=-\rho\frac{\partial}{\partial \rho}[\phi_k(\frac{r}{\rho})],\ \ \ \ 
I'(\rho)=-\rho^{-1}\int r\gamma'(r)  d\mu,
$$
and
$$
J'(\rho)=-\rho^{-1}\int r\gamma'(r)|(\nabla_Mr)^{\perp}|^2 d\mu.
$$
Set 
$$
Y:=\frac{1}{2}\nabla_M f
$$
and let $\{e_i\}(1\leq i\leq n+1)$ be the orthonormal frame of  $M^{n+1}$ where $\{e_i\}$,$1\leq i\leq n$ be the tangential vectors of $\Sigma_{a,t}$,  then
\begin{align}\label{eq: divY}
div_{\Sigma_{a,t}} Y&=\frac{1}{2}\sum^{n}_{i=1}\nabla^2_M f(e_i,e_i)=n+u,
\end{align}
where
\begin{equation}
 u(x)=\left\{
\begin{aligned}
&O(|x|^{-1-\tau})|\xi|+O(|x|^{-\tau}), \quad \text{for $x\in M\setminus B^{n+1}_{R_0}(O)$ },\\
&O(1)|\xi|,\quad\ \qquad \qquad \qquad  \ \ \  \ \text{for $x\in B^{n+1}_{R_0}(O)$ }.
\end{aligned}
\right.\nonumber
\end{equation}
Let $X:=\gamma(r)Y$, then
\begin{equation}
	\begin{split}
div_{\Sigma_{a,t}} X&=n\gamma(r)+u\gamma(r)+\gamma'(r) g(\nabla_{\Sigma_{a,t}} r, Y)	\\
&=n\gamma(r)+\gamma'(r)g(\nabla_{\Sigma}r,r\nabla_Mr)+u\gamma(r)\\
&=n\gamma(r)+r\gamma'(r)(|\nabla_Mr|^2-(e_{n+1}(r))^2)+u\gamma(r)\\
&=n\gamma(r)+r\gamma'(r)-r\gamma'(r)(e_{n+1}(r))^2+r\gamma'(r)v+ u\gamma(r),
\end{split}\nonumber
\end{equation}
where $v$ satisfies
\begin{equation}
 v(x)=\left\{
\begin{aligned}
&O(|x|^{-\tau}), \quad \text{for $x\in M\setminus B^{n+1}_{R_0}(O)$, }\\
&O(1),\quad\ \quad  \ \text{for $x\in B^{n+1}_{R_0}(O)$. }
\end{aligned}
\right.\nonumber
\end{equation}
Note that $\Sigma_{a,t}$ is area-minimizing, we have
$$
\int div_{\Sigma_{a,t}} X d\mu=0.
$$
Hence,
$$
\int(n\gamma(r)+r\gamma'(r))d\mu=\int \gamma'(r)r(e_{n+1}(r))^2 d\mu -\int [r\gamma'(r)v+u\gamma(r)]d\mu.
$$
Or equivalently, 
\begin{equation}\label{eq: differential on I}
\frac{d}{d\rho}(\rho^{-n}I(\rho))=\rho^{-n}J'(\rho)+\rho^{-n}L'(\rho)+\rho^{-n-1}\int u\gamma(r)d\mu.	
\end{equation}
Here 
$$
L(\rho)=-\int \phi_k(\frac{r}{\rho})vd\mu.	
$$
Taking integral of \eqref{eq: differential on I} on $[\rho_1, \rho_2]$ and setting $k\rightarrow \infty$
yields 
\begin{equation}\label{eq: monotonicity formula 1}
	\begin{split}
	&\rho^{-n}_2\mathcal{H}^{n}(\Sigma_{a,t}\cap B^{n+1}_{\rho_2}(\xi))-	\rho^{-n}_1\mathcal{H}^{n}(\Sigma_{a,t}\cap B^{n+1}_{\rho_1}(\xi))\\
    =&\rho^{-n}_2 \int_{\Sigma_{a,t}\cap B^{n+1}_{\rho_2}(\xi)}
    \left(|(\nabla_M r)^{\perp}|^2-v\right)
	-\rho^{-n}_1 \int_{\Sigma_{a,t}\cap B^{n+1}_{\rho_1}(\xi)}
    \left(|(\nabla_M r)^{\perp}|^2-v\right)\\
	+&n\int^{\rho_2}_{\rho_1}s^{-1-n} \left(\int_{\Sigma_{a,t}\cap B^{n+1}_{s}(\xi)}|(\nabla_M r)^{\perp}|^2-v\right)ds
	+\int^{\rho_2}_{\rho_1}s^{-1-n}(\int_{\Sigma_{a,t}\cap B^{n+1}_{s}(\xi)}u)ds.\\
    \end{split}
\end{equation}
By co-area formula, for any $C^0$ function $\varphi$
we have
\[
\frac{d}{ds}\int_{\Sigma_{a,t} \cap B^{n+1}_s(\xi)}\varphi|\nabla_{\Sigma_{a,t}} r|
=\int_{\Sigma_{a,t} \cap \partial B^{n+1}_s(\xi)}\varphi,~~ \text{for a.e. $s\in [\rho_1, \rho_2]$}.
\]
Take $\varphi=(|(\nabla_M r)^{\perp}|^2-v)|\nabla_{\Sigma_{a,t}} r|^{-1}$. Then  multiply $s^{-n}$ on both sides of the equation above and take integral on $[\rho_1,\rho_2]$, we have
\begin{equation}\label{eq: first term}
\begin{split}
&\int^{\rho_2}_{\rho_1}s^{-n}
    \int_{\Sigma_{a,t}\cap \partial B^{n+1}_{s}(\xi)}(|(\nabla_M r)^{\perp}|^2-v)|\nabla_{\Sigma_{a,t}} r|^{-1}ds\\
    =&\rho^{-n}_2 \int_{\Sigma_{a,t}\cap B^{n+1}_{\rho_2}(\xi)}
    (|(\nabla_M r)^{\perp}|^2-v)
	-\rho^{-n}_1 \int_{\Sigma_{a,t}\cap B^{n+1}_{\rho_1}(\xi)}
   ( |(\nabla_M r)^{\perp}|^2-v)\\
	&+n\int^{\rho_2}_{\rho_1}s^{-1-n}\int_{\Sigma_{a,t}\cap B^{n+1}_{s}(\xi)}(|(\nabla_M r)^{\perp}|^2-v)ds.\\
    \end{split}
\end{equation}
On the other hand, set 
$$
y(s):=\int_{\Sigma_{a,t}\cap  B^{n+1}_{s}(\xi))}|v|d\mu,
$$
then  by co-area formula,  for a.e. $s\in[\rho_1,\rho_2]$, we have
$$
y'(s)=\int_{\Sigma_{a,t}\cap \partial B^{n+1}_{s}(\xi)}|v||\nabla_{\Sigma_{a,t}} r|^{-1} d\mu.
$$
It follows that
\begin{equation}\label{eq: estimate term1}
\begin{split}
    \int^{\rho_2}_{\rho_1}s^{-n}
    \int_{\Sigma_{a,t}\cap \partial B^{n+1}_{s}(\xi)}|v||\nabla_{\Sigma_{a,t}} r|^{-1}ds
=& \int^{\rho_2}_{\rho_1}s^{-n} y'(s)ds \\
\leq& \rho^{-n}_2y(\rho_2)+n\int^{\rho_2}_{\rho_1}s^{-n-1}
    \int_{\Sigma_{a,t}\cap B^{n+1}_{s}(\xi)}|v|ds.\\
\end{split}
\end{equation}
Substituting \eqref{eq: first term} and \eqref{eq: estimate term1}
into \eqref{eq: monotonicity formula 1} gives \eqref{eq: monotonicity formula}.
\end{proof}

In Lemma \ref{monotonicity outside compact set} and Lemma \ref{lem: almost monotonicity across compact set} below, we establish the necessary monotonicity properties to obtain density estimates in $\Sigma_{a,t}$. Then we prove the main result Proposition \ref{prop: density estimate1} in this subsection .

\begin{lemma}\label{monotonicity outside compact set}

(Monotonicity outside $B^{n+1}_{\rho_0}(O)$)\\
Given $\delta\in(0,1)$, there exists $\rho_0 = \rho_0(M,g,\delta)\geq R_0$, such that for any $\xi\in \Sigma_{a,t}$ with $|\xi|\geq \rho_0$ and any $\rho_1, \rho_2$ with $\delta\leq \rho_1<\rho_2$, if  $(B^{n+1}_{\rho_2}(\xi)\setminus B^{n+1}_{\rho_1}(\xi))\subset M\backslash B^{n+1}_{\rho_0}(O)$ and $B^{n+1}_{\rho_2}(\xi)\cap\partial\Sigma_{a,t}=\emptyset$,
then
\begin{equation}\label{eq: density estimate}
  \theta_{\xi}(\rho_2,a)- \theta_{\xi}(\rho_1,a)\ge -\delta.  
\end{equation}
\end{lemma}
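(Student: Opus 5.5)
We plan to start from the monotonicity formula of Lemma \ref{lmm: monotonicity ineq2} and show that, once $|\xi|$ is large and the annulus $B^{n+1}_{\rho_2}(\xi)\setminus B^{n+1}_{\rho_1}(\xi)$ avoids $B^{n+1}_{\rho_0}(O)$, each error term on its right-hand side is at most $\delta/2$ in absolute value. The difficulty is that $\rho_2$ is not bounded. Hence we cannot bound $u$ and $v$ by their values near $\xi$ alone. We must combine their decay in $|x|$ with an a priori area bound for $\Sigma_{a,t}$ in coordinate balls.

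The first ingredient is a crude area bound, which we take from the minimizing property. For any coordinate ball $B^{n+1}_s(\xi)$ not meeting $\partial\Sigma_{a,t}$, comparing $\Sigma_{a,t}$ with a piece of the sphere $\partial B^{n+1}_s(\xi)$ gives $\mathcal{H}^n(\Sigma_{a,t}\cap B^{n+1}_s(\xi))\le C s^n$. Here $C$ depends only on $n$ and the uniform equivalence of $g$ with the Euclidean metric. Since $\Sigma_{a,t}$ bounds the Caccioppoli set $E_{a,t}$, cut-and-paste along the sphere is legitimate.

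Next we treat the case where the whole ball $B^{n+1}_{\rho_2}(\xi)$ lies outside $B^{n+1}_{\rho_0}(O)$. For $x$ in this ball we have $|x|\ge\rho_0$, so $|v|\le C|x|^{-\tau}\le C\rho_0^{-\tau}$. Moreover $|u|\le C|x|^{-1-\tau}|\xi|+C|x|^{-\tau}$. The factor $|\xi|$ is dangerous. We handle it by noting $|\xi|\le |x|+s$ for $x\in B_s(\xi)$, which gives $|u|\le C|x|^{-\tau}(1+s/|x|)$.

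For the integral term we split by the relative size of $s$. When $s\le |\xi|/2$ we have $|x|\ge|\xi|/2$, so $|u|+n|v|\le C|\xi|^{-\tau}$. The $s$-integral is then bounded by $C|\xi|^{-\tau}\log(\rho_2/\rho_1)$. This logarithm is the problematic part, since $\rho_1\ge\delta$ but $\rho_2$ may be as large as $|\xi|$. We will instead integrate against the decay: $\int_{\Sigma\cap B_s}|x|^{-\tau}\le C s^{n}\min(|\xi|,s)^{-\tau}$, with a dyadic decomposition in $|x|$ when $s\gtrsim|\xi|$. This uses $\tau>0$ together with the area bound on $\Sigma\cap B_{2^{-j}}$-shells. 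It yields $\int_{\delta}^{\rho_2}s^{-1}\min(|\xi|,s)^{-\tau}\,ds$, which is bounded by $C|\xi|^{-\tau}\log(|\xi|/\delta)+C|\xi|^{-\tau}$. Both pieces tend to $0$ as $|\xi|\ge\rho_0\to\infty$. The boundary term $\rho_2^{-n}\int_{B_{\rho_2}}|v|$ is handled in the same way.

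The remaining case is when $B^{n+1}_{\rho_1}(\xi)$ contains the compact region $B^{n+1}_{\rho_0}(O)$ but the annulus avoids it. Then $\rho_1\ge|\xi|-\rho_0\ge |\xi|/2$, and the inner region contributes through $u=O(|\xi|)$ and $v=O(1)$ on a set of area at most $C\rho_0^n$. This contribution to the integral is at most $\int_{\rho_1}^\infty s^{-n-1}C|\xi|\rho_0^n\,ds\le C\rho_0^n|\xi|\rho_1^{-n}\le C\rho_0^n|\xi|^{1-n}$. Since $n\ge3$, this is small once $|\xi|$ is large relative to $\rho_0$. The outer part is estimated as in the previous case.

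The main obstacle is making the choice of $\rho_0$ consistent. The constant in the inner-region estimate depends on $R_0$, the region where $u,v$ are only $O(1)$, and not on $\rho_0$. We therefore fix $R_0$ first, and then take $\rho_0\gg R_0$ so that both $C R_0^n\rho_0^{1-n}$ and $C\rho_0^{-\tau}\log(\rho_0/\delta)$ are below $\delta/2$.
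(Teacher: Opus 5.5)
Your proposal is correct and is essentially the paper's argument. It uses the same split into the case $B^{n+1}_{\rho_2}(\xi)\cap B^{n+1}_{\rho_0}(O)=\emptyset$ and the case $B^{n+1}_{\rho_0}(O)\subset B^{n+1}_{\rho_1}(\xi)$, the area bound from minimality, and a weighted annular area estimate that beats the $\log(|\xi|/\delta)$ factor. The paper simply majorizes $|u|+n|v|$ by $C|\xi|\,|x|^{-3/2}$ and cites Lemma 65 of \cite{EK23}, while you keep $|x|^{-\tau}$, absorb $|\xi|$ via $|\xi|\le |x|+s$, and do the dyadic sum by hand.

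Two slips to fix. The weighted bound must read $\int_{\Sigma_{a,t}\cap B^{n+1}_s(\xi)}|x|^{-\tau}\le Cs^n\max(|\xi|,s)^{-\tau}$, not $\min$; with $\min$ the $s$-integral would be of order $\delta^{-\tau}$ rather than small. Also, the dyadic sum being dominated by its top shell needs $\tau<n$, not $\tau>0$.
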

\begin{proof} 
We introduce a function $\Phi$ given by
\begin{equation}
 \Phi(x)=\left\{
\begin{aligned}
&|x|^{-\frac{3}{2}},\ \ \quad \text{for $x\in M\setminus B^{n+1}_{R_0}(O)$ },\\
& 1,\quad\ \quad \ \ \  \ \text{for $x\in B^{n+1}_{R_0}(O)$ }.
\end{aligned}
\right.\nonumber
\end{equation}
Note that $\tau>\frac{n}{2}\geq\frac{3}{2}$. According to the definition of $u$ and $v$ in the proof of Lemma \ref{lmm: monotonicity ineq2}, we see that there exists some uniform $C$ such that 
\begin{equation}\label{eq: bound u,v}
|u|+n|v|\leq C|\xi|\Phi.    
\end{equation}
We consider the following two cases.

\textbf{Case 1:} $B^{n+1}_{\rho_1}(\xi)\subset B^{n+1}_{\rho_2}(\xi)\subset M\backslash B^{n+1}_{\rho_0}(O)$.\\
Using $v=O(x^{-\tau})$ outside $B^{n+1}_{\rho_0}(O)$ and the minimality of $\Sigma_{a,t}$,
we have
\begin{equation}\label{eq: ft for v}
  \rho^{-n}_2
  \int_{\Sigma_{a,t}\cap  B^{n+1}_{\rho_2}(\xi)}|v|d\mu\leq
 \rho_0^{-\tau}\rho^{-n}_2 \mathcal{H}^n(\Sigma_{a,t}\cap B^{n+1}_{\rho_2}(\xi))
  \leq
  C\rho_0^{-\tau}\leq\frac{\delta}{2}
\end{equation}
if we choose $\rho_0\geq C^2\delta^{-1}$. 

\textbf{Claim:}  There exists some uniform $C$ depending only on $(M^{n+1}, g)$ such that 
\begin{equation}\label{eq: decay estimate1}
  \int_{\Sigma_{a,t}\cap B^{n+1}_{s}(\xi)}\Phi=\int_{\Sigma_{a,t}\cap B^{n+1}_{s}(\xi)}|x|^{-\frac{3}{2}}
 \leq C|\xi|^{-\frac{3}{2}}s^{n}.   
\end{equation}
We only need to consider the case that $|\xi|\leq 2s$,
otherwise we immediately have 
\begin{equation}\label{eq: decay estimate2}
 \int_{\Sigma_{a,t}\cap B^{n+1}_{s}(\xi)}|x|^{-\frac{3}{2}}
 \leq \mathcal{H}^n(\Sigma_{a,t}\cap B^{n+1}_{s}(\xi))(\frac{|\xi|}{2})^{-\frac{3}{2}}
 \leq C|\xi|^{-\frac{3}{2}}s^n.  
\end{equation}
Since $\Sigma_{a,t}$ is area-minimizing and $(M^{n+1},g)$ is asymptotically flat, we have
\begin{equation}
 \int_{\Sigma_{a,t}\cap B^{n+1}_{s}(\xi)}|x|^{-\frac{3}{2}}
 \leq C\int_{\Sigma_{a,t}\cap B^{n+1}_{s}(\xi)}|x|^{-\frac{3}{2}}d\bar{\mu},  
\end{equation}
where $d\bar{\mu}$ is the area element of $\Sigma_{a,t}$ in $(\mathbf{R}^{n+1}, \delta_{ij})$. 
By Lemma 65 in \cite{EK23} we have
\begin{equation}
\begin{split}
  \int_{\Sigma_{a,t}\cap B^{n+1}_{s}(\xi)}|x|^{-\frac{3}{2}}d\bar{\mu}
  \leq&\int_{\Sigma_{a,t}\cap (B^{n+1}_{|\xi|+s}(O)\setminus B^{n+1}_{|\xi|-s}(O))}|x|^{-\frac{3}{2}}d\bar{\mu}\\
    \leq&  C(|\xi|+s)^{-\frac{3}{2}}+C \Large((|\xi|+s)^{n-\frac{3}{2}}+(|\xi|-s)^{n-\frac{3}{2}}\Large)\\
    \leq&C|\xi|^{-\frac{3}{2}}s^n.
\end{split} 
\end{equation}
 Note $|\xi|\geq \rho_0+\rho_2$ and $\rho_1\geq\delta$.
By choosing 
$\rho_0\geq C\delta^{-3}$, we have
\begin{equation}\label{eq: last term}
\begin{split}
   \int^{\rho_2}_{\rho_1}s^{-1-n}(\int_{\Sigma_{a,t}\cap B^{n+1}_{s}(\xi)}\Phi)ds
    \leq& C|\xi|^{-\frac{3}{2}}\int^{\rho_2}_{\rho_1}s^{-1}ds\\
    \leq& C|\xi|^{-\frac{3}{2}}(\ln\rho_2-\ln\rho_1)\\
    \leq&C|\xi|^{-\frac{3}{2}}(\ln|\xi|-\ln\delta)\\
    <&C^{-1}|\xi|^{-1}\frac{\delta}{2}. 
\end{split}
\end{equation}
Plugging \eqref{eq: bound u,v}, \eqref{eq: ft for v} and \eqref{eq: last term}
into \eqref{eq: monotonicity formula} gives
 the desired estimate.\\
\textbf{Case 2:} $B^{n+1}_{\rho_0}(O)\subset B^{n+1}_{\rho_1}(\xi)\subset B^{n+1}_{\rho_2}(\xi)$.\\
In this case, $\rho_2>\rho_1\geq \rho_0+|\xi|$.
we have
\begin{equation}\label{eq: estimate phi 2}
\begin{split}
      \int_{\Sigma_{a,t}\cap B^{n+1}_{s}(\xi)}\Phi
      =& 
      \int_{\Sigma_{a,t}\cap B^{n+1}_{s} (\xi)\cap B^{n+1}_{R_0} (O)}\Phi
      +\int_{\Sigma_{a,t}\cap (B^{n+1}_{s} (\xi)\setminus B^{n+1}_{R_0} (O))}\Phi\\
      \leq& \mathcal{H}^{n}(\Sigma_{a,t}\cap B^{n+1}_{R_0}(O))+\int_{\Sigma_{a,t}\cap (B^{n+1}_{s} (\xi)\setminus B^{n+1}_{R_0} (O))}|x|^{-\frac{3}{2}}\\
      \leq&C(R_0^{n}+( |\xi|+s )^{n-\frac{3}{2}}),
\end{split}
\end{equation}
where we have used Lemma 65 in \cite{EK23} in the last inequality.
It follows that 
\begin{equation}\label{eq: ft for v 2}
\begin{split}
    \rho^{-n}_2
  \int_{\Sigma_{a,t}\cap  B^{n+1}_{\rho_2}(\xi))}|v|d\mu
  \leq& C\rho_2^{-n}\int_{\Sigma_{a,t}\cap B^{n+1}_{s}(\xi)}\Phi\\
  \leq&C\rho_2^{-n}(R_0^{n}+( |\xi|+\rho_2 )^{n-\frac{3}{2}})\\
  \leq&\frac{\delta}{2}  
\end{split} 
\end{equation}
by our choice of $\rho_0$.
A straightforward calculation shows
\begin{equation}\label{eq: estimate term2}
   \int_{\rho_1}^{\rho_2} s^{-1-n}(|\xi|+s)^{n-\frac{3}{2}}ds
\le C\int_{\rho_1}^{\rho_2}s^{-\frac{5}{2}}ds\le C\rho_1^{-\frac{3}{2}}<C^{-1}|\xi|^{-1}\frac{\delta}{2}. 
\end{equation}
Substituting  \eqref{eq: bound u,v} and  \eqref{eq: estimate phi 2}--\eqref{eq: estimate term2} into \eqref{eq: monotonicity formula} yields \eqref{eq: density estimate}.
\end{proof}

Next, establish an  almost monotonicity across the compact set $B_{\rho_0}^{n+1}(O)$.
\begin{lemma}\label{lem: almost monotonicity across compact set}(Almost monotonicity across the compact set)\\
    Let $\delta,\rho_0$ be as in Lemma \ref{monotonicity outside compact set}. Then there exists $L>2$ depending only on $\delta$ such that for any 
    $\xi\in \Sigma_{a,t}\setminus  B^{n+1}_{L\rho_0}(O) $ and  any $\rho_1, \rho_2$ with $\delta\leq \rho_1<\rho_2$, if  $B^{n+1}_{\rho_1}(\xi)\cap B^{n+1}_{\rho_0}(O)=\emptyset$, $B^{n+1}_{\rho_0}(O)\subset B^{n+1}_{\rho_2}(\xi)$ and $B^{n+1}_{\rho_2}(\xi)\cap\partial\Sigma_{a,t}=\emptyset$, then
    \begin{align*}
        \theta_{\xi}(\rho_1,a)\le (1+\delta)\theta_{\xi}(\rho_2,a)+3\delta
    \end{align*}
\end{lemma}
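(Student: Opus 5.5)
The idea is to split the radius range $[\rho_1,\rho_2]$ at the two radii where the coordinate ball $B^{n+1}_s(\xi)$ first touches and then swallows the compact set $B^{n+1}_{\rho_0}(O)$. Set
\[
s_1:=|\xi|-\rho_0,\qquad s_2:=|\xi|+\rho_0 .
\]
Since $B^{n+1}_{\rho_1}(\xi)\cap B^{n+1}_{\rho_0}(O)=\emptyset$, we have $\rho_1\le s_1$. Since $B^{n+1}_{\rho_0}(O)\subset B^{n+1}_{\rho_2}(\xi)$, we have $\rho_2\ge s_2$. Hence
\[
\delta\le\rho_1\le s_1<s_2\le\rho_2 .
\]
If $s_1$ is exactly tangent, I would replace it by $s_1-\varepsilon$ and let $\varepsilon\to0$ at the end, using continuity of $\theta_\xi(\cdot,a)$ from the left. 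Also, $B^{n+1}_{s}(\xi)\cap\partial\Sigma_{a,t}=\emptyset$ for every $s\le\rho_2$, because the same holds for $\rho_2$.

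\emph{Inner piece $[\rho_1,s_1]$.} The whole ball $B^{n+1}_{s_1}(\xi)$ lies in $M\setminus B^{n+1}_{\rho_0}(O)$, and $|\xi|\ge L\rho_0\ge\rho_0$. So Case 1 of Lemma \ref{monotonicity outside compact set} applies and gives
\[
\theta_\xi(s_1,a)\ge\theta_\xi(\rho_1,a)-\delta .
\]

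\emph{Outer piece $[s_2,\rho_2]$.} Here $B^{n+1}_{\rho_0}(O)\subset B^{n+1}_{s_2}(\xi)$, and the annulus $B^{n+1}_{\rho_2}(\xi)\setminus B^{n+1}_{s_2}(\xi)$ avoids $B^{n+1}_{\rho_0}(O)$. This is exactly Case 2 of Lemma \ref{monotonicity outside compact set}, which gives
\[
\theta_\xi(\rho_2,a)\ge\theta_\xi(s_2,a)-\delta .
\]

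\emph{Middle piece $[s_1,s_2]$.} This is the only place where no monotonicity formula is available, because the ball crosses the compact set where $u,v$ are uncontrolled. Instead I would use the trivial bound $\mathcal H^n(\Sigma_{a,t}\cap B^{n+1}_{s_2}(\xi))\ge\mathcal H^n(\Sigma_{a,t}\cap B^{n+1}_{s_1}(\xi))$, which yields
\[
\theta_\xi(s_2,a)\ge\Big(\frac{s_1}{s_2}\Big)^n\theta_\xi(s_1,a)=\Big(\frac{|\xi|-\rho_0}{|\xi|+\rho_0}\Big)^n\theta_\xi(s_1,a)\ge\Big(\frac{L-1}{L+1}\Big)^n\theta_\xi(s_1,a).
\]
Choose $L>2$, depending only on $\delta$ and the fixed dimension $n$, so that $\big(\frac{L+1}{L-1}\big)^n\le1+\delta$. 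Then $\theta_\xi(s_1,a)\le(1+\delta)\theta_\xi(s_2,a)$. The key point is that this loss is a small multiplicative factor, because the compact set is tiny relative to $|\xi|$.

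Chaining the three pieces and using $\delta<1$:
\[
\theta_\xi(\rho_1,a)\le\theta_\xi(s_1,a)+\delta\le(1+\delta)\theta_\xi(s_2,a)+\delta\le(1+\delta)\big(\theta_\xi(\rho_2,a)+\delta\big)+\delta\le(1+\delta)\theta_\xi(\rho_2,a)+3\delta .
\]

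The main obstacle is verifying that the hypotheses of the two cases of Lemma \ref{monotonicity outside compact set} are met exactly at the splitting radii. This concerns the tangency at $s_1$ and the requirement in Case 2 that $s_2\ge\rho_0+|\xi|$. Choosing $s_1,s_2$ as above, with the limiting argument at the endpoint, should handle it.
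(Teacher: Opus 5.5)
Your proof is correct and is essentially the paper's argument. It uses the same split of $[\rho_1,\rho_2]$ at $|\xi|\mp\rho_0$, applies Lemma \ref{monotonicity outside compact set} on the two outer pieces, and crosses the middle piece by the trivial area comparison with factor $\big(\frac{L+1}{L-1}\big)^n\le 1+\delta$. The limiting argument at $s_1$ is not even needed, since the open ball $B^{n+1}_{|\xi|-\rho_0}(\xi)$ is already disjoint from $B^{n+1}_{\rho_0}(O)$.
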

\begin{proof}
By our assumption, $|\xi|-\rho_0\geq\rho_1$ and $|\xi|+\rho_0\leq \rho_2$.
Divide $[\rho_1,\rho_2]$ into $[\rho_1,|\xi|-\rho_0]\cup[|\xi|-\rho_0,|\xi|+\rho_0]\cup[|\xi|+\rho_0,\rho_2]$. Then
    \begin{align*}
         B^{n+1}_{\rho_1}(\xi)\subset B^{n+1}_{|\xi|-\rho_0}(\xi)\subset M\backslash B^{n+1}_{\rho_0}(O)\ \ \text{and}\ \ 
         B^{n+1}_{\rho_0}(O)\subset B^{n+1}_{|\xi|+\rho_0}(\xi)\subset B^{n+1}_{\rho_2}(\xi).
    \end{align*}
    So by applying Lemma \ref{monotonicity outside compact set} we have
    \begin{equation}\label{eq: 3}
    \theta_{\xi}(\rho_1,a)\le \theta_{\xi}(|\xi|-\rho_0,a)+\delta\ \ \ \text{and}
\ \ \ \theta_{\xi}(|\xi|+\rho_0,a)\le \theta_{\xi}(\rho_2,a)+\delta.        
    \end{equation}
    Notice that
    \begin{equation}\label{eq: 4}
        \begin{split}
          \theta_{\xi}(|\xi|-\rho_0,a)\leq \theta_{\xi}(|\xi|+\rho_0,a) \frac{(|\xi|+\rho_0)^{n}}{(|\xi|-\rho_0)^{n}}
         \leq \theta_{\xi}(|\xi|+\rho_0,a) (\frac{L+1}{L-1})^{n}
        \le& (1+\delta)\theta_{\xi}(|\xi|+\rho_0,a) 
        \end{split}
    \end{equation}
    when $L$ is sufficiently large. Combining \eqref{eq: 3} with \eqref{eq: 4} we get the desired estimate.
\end{proof}

By combining Lemma \ref{monotonicity outside compact set} and Lemma \ref{lem: almost monotonicity across compact set}, we are ready to prove the main proposition concerning density estimate in this subsection.

\begin{proposition}\label{prop: density estimate1}
  Let $\delta\in(0,\frac{1}{100}),L$ and $\rho_0$  be as above.
  Then there exists $a_0>L\rho_0>0$, such that for any $a>a_0$ and $\xi\in \Sigma_{a,t}\backslash B^{n+1}_{L\rho_0}(O)$ with $d(\xi,\partial\Sigma_{a,t})\geq \frac{a}{2}$, there holds
    \begin{align}\label{eq: density estimate2}
        \theta_{\xi}(\rho,a)\le 1+20\delta
    \end{align}
    for any $ \rho>0$ with $B^{n+1}_{\rho}(\xi)\cap\partial\Sigma_{a,t}=\emptyset$.
\end{proposition}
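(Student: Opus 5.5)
Fix $\xi\in\Sigma_{a,t}\setminus B^{n+1}_{L\rho_0}(O)$ with $d(\xi,\partial\Sigma_{a,t})\ge a/2$ and $\rho>0$ with $B^{n+1}_\rho(\xi)\cap\partial\Sigma_{a,t}=\emptyset$. The plan is to push the density ratio from radius $\rho$ up to a large comparison radius $\rho_2\approx a/2$, which stays clear of the boundary. At that radius, minimality against the coordinate disk bounded by $S_{a,t}$ shows the density is at most $1+O(\delta)$.

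\textbf{Step 1 (large-scale bound).} Take $\rho_2$ slightly less than $d(\xi,\partial\Sigma_{a,t})$, of order $a$; we may assume $\rho<\rho_2$, otherwise we take $\rho_2=\rho$. The competitor is $\Sigma_{a,t}$ with $\Sigma_{a,t}\cap B_{\rho_2}(\xi)$ replaced by a piece of the boundary of the coordinate ball that closes it up. A cleaner route is the global comparison
\[
\mathcal H^n(\Sigma_{a,t})\le \mathcal H^n_g(S_t\cap C_a)\le (1+Ca^{-\tau})\,\omega_n a^n+C,
\]
using the AF condition and that $S_t\cap C_a$ is admissible; the additive $C$ accounts for the compact core if $S_t$ passes through it. Since $B_{\rho_2}(\xi)\subset C_a$ and $\rho_2\ge a/2$, this alone only gives $\theta_\xi(\rho_2,a)\le 2^n(1+o(1))$, which is too crude.

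So instead I would let $\rho_2$ be large relative to $a$ but use Simon's boundary-version monotonicity: $\partial\Sigma_{a,t}=S_{a,t}$ is a flat $(n-1)$-sphere of radius $a$ in a nearly Euclidean region. For $s\ge 10a$, the ball $B_s(\xi)$ contains all of $\Sigma_{a,t}$, so
\[
\theta_\xi(s,a)\le \frac{(1+Ca^{-\tau})\omega_na^n+C}{\omega_n s^n}\to 0 .
\]
This gives the wrong direction, so that route fails as well. The working plan is therefore a monotonicity with boundary term. The boundary term is of size $\rho^{-n}\int_{\partial\Sigma}|x-\xi|\le C\,a^{n-1}\cdot\mathrm{dist}(\xi,\text{plane of }S_{a,t})\,/\rho^n$. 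It is controlled because, by height bounds (barriers from the mean convex cylinders), $\Sigma_{a,t}$ stays near $S_t$, so the boundary contribution gives density $\le 1$ plus small in the limit $\rho\to\infty$. Concretely, one compares $\Sigma_{a,t}\cap B_s(\xi)$ with the flat disk cone and obtains $\limsup$ density $\le1+\delta$ at scale $s\sim a/2$.

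\textbf{Step 2 (descending).} From scale $\rho_2\sim a/2$ down to $\rho$:
\begin{itemize}
\item If $B_{\rho_2}(\xi)$ misses $B_{\rho_0}(O)$, Lemma \ref{monotonicity outside compact set} gives $\theta_\xi(\rho,a)\le\theta_\xi(\rho_2,a)+\delta$ for $\rho\ge\delta$.
\item If $B_{\rho_2}(\xi)$ contains $B_{\rho_0}(O)$, then Lemma \ref{lem: almost monotonicity across compact set} (together with Lemma \ref{monotonicity outside compact set} for the outer pieces) gives $\theta_\xi(\rho,a)\le(1+\delta)\theta_\xi(\rho_2,a)+3\delta$.
\item For $\rho<\delta$, use the local monotonicity Lemma \ref{lm: monotonicity ineq2} with factor $e^{k_0\delta}$ and the comparability of geodesic and coordinate balls far out, which costs another $O(\delta)$.
\end{itemize}
Collecting the errors, $(1+\delta)(1+\delta)+O(\delta)\le 1+20\delta$ for $\delta<1/100$.

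The main obstacle is Step 1: obtaining density $\le 1+O(\delta)$ at the scale where the boundary becomes relevant. I would first try the direct cut-and-paste comparison, replacing $\Sigma_{a,t}\cap B_s(\xi)$ by a nearly flat cap, supported by a height estimate placing $\Sigma_{a,t}$ in a thin slab around $S_t$ far out. If that fails, I would choose $a_0$ large so that the AF error $Ca^{-\tau}$ is absorbed.
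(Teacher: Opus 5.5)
There is a genuine gap, and it is exactly where you put it. Step 1 never produces $\theta_\xi(\rho_2,a)\le 1+O(\delta)$ at any large scale, and Step 2 only transports such a bound downward, so nothing is proved. Your two explicit computations fail for the reasons you give, and the replacement ``working plan'' is not an argument.

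In the Euclidean model, the monotonicity identity with boundary gives, for every ball $B_\rho(\xi)$ missing $\partial\Sigma_{a,t}$, the bound $\omega_n\theta_\xi(\rho,a)\le\frac1n\int_{\partial\Sigma_{a,t}}\frac{(x-\xi)\cdot\eta}{|x-\xi|^{n}}$, where $\eta$ is the outer conormal of $\Sigma_{a,t}$ along $S_{a,t}$.
\begin{itemize}
\item The right side equals $\omega_n$ for the flat disk. For $\Sigma_{a,t}$ it depends on $\eta$, about which you know nothing; controlling it would be a boundary regularity statement.
\item The trivial bound $|(x-\xi)\cdot\eta|\le|x-\xi|$ gives at most $1$ when $\xi$ lies on the axis of $C_a$. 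Once $|\xi|$ is comparable to $a$ it gives a constant exceeding $1$ by a definite amount, because the spherical mean of $|x-y|^{1-n}$ is strictly subharmonic in $y$.
\item Your estimate of the boundary term by $a^{n-1}\dist(\xi,\text{plane of }S_{a,t})$ is false as written, since $|x-\xi|\ge a/2$ on $\partial\Sigma_{a,t}$.
\item Choosing $a_0$ large to absorb $Ca^{-\tau}$ is beside the point: the loss is the factor $2^n$ from working at radius $a/2$, not an AF error.
\end{itemize}

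The idea you are missing is the paper's case split, run by contradiction along $a_i\to\infty$ with $\epsilon$ fixed by $(1-\epsilon)^{-n}=1+\delta$.

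\textbf{Case $|\xi|<\epsilon a$.} Your crude computation already works at the right radius; $a/2$ was only the worst case. Since $S_{a,t}$ lies at coordinate distance $\ge a$ from $O$, the ball $B_{(1-\epsilon)a}(\xi)$ still misses $\partial\Sigma_{a,t}$. The bound $\mathcal H^n(\Sigma_{a,t})\le(1+o(1))\omega_na^n$ then gives $\theta_\xi((1-\epsilon)a,a)\le(1+\delta)(1+o(1))$. Lemmas \ref{monotonicity outside compact set} and \ref{lem: almost monotonicity across compact set} carry this down to all $\rho\ge\delta$, and your treatment of $\rho<\delta$ via Lemma \ref{lm: monotonicity ineq2} is fine.

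\textbf{Case $|\xi|\ge\epsilon a$.} Blow down. Since $t/a_i\to0$, $a_i^{-1}\Sigma_{a_i,t}$ converges as varifolds to the flat unit $n$-disk spanning $\partial B_1\cap S_0$. That disk has density ratio at most $1$ about every center and at every radius. Upper semicontinuity of mass on closed balls then gives $\theta_{\xi_i}\le 1+o(1)$ at scales comparable to $a_i$. Lemma \ref{monotonicity outside compact set} applies because $B_{\epsilon a_i/2}(\xi_i)$ stays away from $B_{\rho_0}(O)$, and it brings this down to scales $\ge\delta$.

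\textbf{Alternative via your fallback.} Your cut-and-paste idea can be made rigorous, but only with an input you never invoke. You need the uniform slab bound $|z-t|\le h$ on $\Sigma_{a,t}$ from the catenoidal barriers, stated before Corollary \ref{cor: bound ms}; its proof does not use this proposition. With it, compare $E_{a,t}$ against $(E_{a,t}\setminus B_s(\xi))\cup(B_s(\xi)\cap\{z\le t\})$. This costs at most the area of the zone $\partial B_s(\xi)\cap\{|z-t|\le h\}$, i.e.\ $O(hs^{n-1})$. Hence $\theta_\xi(s,a)\le 1+O(h/s)+o(1)$ for all large $s$, which avoids the blow-down entirely. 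It still has to be carried out, including the modification needed when $B_s(\xi)$ meets the compact core, and your proposal does not do so.
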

\begin{proof}
We briefly sketch the proof. We divide the argument into two cases: 
$\rho>\delta$ and $\rho\le\delta$. The case $\rho>\delta$ is further 
divided into two subcases. The first case is $|\xi|<\epsilon a$, where $\epsilon$ is a fixed small 
constant determined by $\delta$. In this case we apply 
Lemma \ref{monotonicity outside compact set} and 
Lemma \ref{lem: almost monotonicity across compact set} to show that 
the density $\theta_\xi(\rho,a)$ is comparable to 
$\theta_\xi((1-\epsilon)a,a)$. The latter can in turn be compared with 
$\theta_O(a,a)=1+o(1)$. The second case is $|\xi|>\epsilon a$. In this case $\xi$ can be detected 
via a blow-down argument, which allows us to conclude the proof of the 
case $\rho>\delta$. For the case $\rho\le\delta$, we apply the local monotonicity inequality 
in Lemma \ref{lm: monotonicity ineq2} to reduce the problem to the case 
$\rho>\delta$.

Let's begin with the $\rho>\delta$ case. We adopt the contradiction argument to  show  there exists $a_0>L\rho_0$, such that
 \begin{align}\label{eq: density estimate3}
        \theta_{\xi}(\rho,a)\le 1+8\delta \ \ \ \text{for}\ \ 
        \rho\geq\delta.
    \end{align}
whenever $a>a_0$. Suppose not, then there exists a sequence of $a_i\rightarrow\infty$, and $\xi_i$ with 
    $|\xi_i|>L\rho_0$ and $\rho_i$ with $\delta\leq\rho_i$ such that
\begin{equation}\label{eq: area growth}
   \theta_{\xi_i}(\rho_i,a_i)> 1+8\delta.  
\end{equation}
We consider the following two cases:

    \textbf{Case 1:} There is a subsequence(still denoted by $i$)
    such that $L\rho_0<|\xi_i|<\epsilon a_i$ where  $\varepsilon$ is given by $1+\delta=(1-\varepsilon)^{-n}$. In this case, choose
    $\tilde{\rho}_i=(1-\epsilon)a_i$, without loss of generality, we may assume
    $\tilde{\rho}_i\geq \rho_i$.
    Then
    by Lemma \ref{monotonicity outside compact set} and  Lemma \ref{lem: almost monotonicity across compact set},  
    \begin{equation}\label{eq: little center}
       \theta_{\xi_i}(\tilde{\rho}_i,a_i)
       \geq (1+\delta)^{-1}\theta_{\xi_i}(\rho_i,a_i)-3\delta
       \geq 1+2\delta.
    \end{equation}
    Since $\Sigma_{a_i,t}$ is the area-minimizing hypersurface in asymtotically flat manifold
    with $\partial\Sigma_{a_i,t}=\partial D_{a_i,t}$, we have
    \begin{equation}
    \begin{split}
  \lim_{i\rightarrow\infty}\theta_{\xi_i}(\tilde{\rho}_i,a_i)=&
   \lim_{i\rightarrow\infty}\frac{\mathcal{H}^{n}(B^{n+1}_{\tilde{\rho}_i}(\xi_i)\cap\Sigma_{a_i,t})}{\omega_{n}\tilde{\rho}_i^{n}}\\
   \leq& \lim_{i\rightarrow\infty}\frac{\mathcal{H}^{n}(B^{n+1}_{(1-\varepsilon)a_i}(\xi_i)\cap\Sigma_{a_i,t})}{\omega_{n} a_i^{n-1}}\frac{1}{(1-\varepsilon)^{n}}\\
\leq& 1+\delta,
\end{split}
    \end{equation}
which is incompatible with \eqref{eq: little center}.  
    
    \textbf{Case 2:} $\epsilon a_i<|\xi_i|$. In this case, by Lemma \ref{monotonicity outside compact set}, we may choose some $\tilde{\rho}_i\geq \varepsilon a_i$ such that
    \begin{equation}\label{eq: rescale radius}
        \theta_{\xi_i}(\tilde{\rho}_i,a_i)\geq1+2\delta.
    \end{equation}
    We adopt the blow down argument.   
Now $\{(M^{n+1}\setminus B^{n+1}_1(O), a_i^{-2}g)\}$ converges to $(\mathbf{R}^{n+1}\setminus\{O\}, \delta_{ij})$
in the $C^{\infty}_{loc}$ sense as $i\rightarrow \infty$. 
Meanwhile, $\{(a_i^{-1}\Sigma_{a_i,t},a_i^{-1}\xi_i)\}$  converges to an area-minimizing hypersurface $(\bar{\Sigma}, \bar{\xi})$
with $\partial \bar{\Sigma}=\partial B^{n+1}_1(O)\cap S_0$, in both current and varifold sense; see \cite[Theorem 34.5]{Simon83}.  Thus, $\bar{\Sigma}$ must be a part of  the hyperplane. The condition $|\xi_i|>\epsilon a_i$ implies $|\bar{\xi}|\ge\epsilon>0$, which means $\xi\in\bar{\Sigma}$ is a point other than the origin $O$. Note that the density of $\bar{\Sigma}$ at $\xi$ is $1$, $i.e.$
\[
\frac{\mathcal{H}^{n}(B^{n+1}_{s}(\bar{\xi})\cap\bar{\Sigma})}{\omega_{n}s^{n}}= 1
\ \ \ \text{for}\ \ 0<s<1-|\bar\xi|,
\]
which contradicts with \eqref{eq: rescale radius} due to the local varifold convergence $a_i^{-1}\Sigma_{a_i,t}\longrightarrow\bar{\Sigma}$ in $\mathbf{R}^{n+1}\backslash\{O\}$.

$\quad$

Next, we use Lemma \ref{lm: monotonicity ineq2} to deal with the case $\rho\leq\delta$. By taking some larger $\rho_0$ if necessary we have for $0<\rho\leq\delta$
\begin{equation}\label{eq: area comparison}
     \mathcal{H}^{n}(B^{n+1}_\rho(\xi)\cap\Sigma_{a,t})\leq
(1+\delta)\mathcal{H}^{n}(\mathcal{B}^{n+1}_\rho(\xi)\cap\Sigma_{a,t})
\leq  (1+\delta)^2\mathcal{H}^{n}(B^{n+1}_\rho(\xi)\cap\Sigma_{a,t}),
\end{equation}
whenever $|\xi|>L\rho_0$. Here $\mathcal{B}^{n+1}_\rho(\xi)$ denotes the geodesic ball in $(M^{n+1},g)$.
Then, using Lemma \ref{lm: monotonicity ineq2} we have
 \begin{equation}\label{eq: area comparison2}
    \begin{split}
    \theta_{\xi}(\rho,a)=&\frac{\mathcal{H}^{n}(B^{n+1}_{\rho}(\xi)\cap\Sigma_{a,t})}{\omega_{n}\rho^{n}}\\
    \leq & (1+\delta)\frac{\mathcal{H}^{n}(\mathcal{B}^{n+1}_{\rho}(\xi)\cap\Sigma_{a,t})}{\omega_{n}\rho^{n}}\\
    \leq& (1+\delta)^2\frac{\mathcal{H}^{n}(\mathcal{B}^{n+1}_{\delta}(\xi)\cap\Sigma_{a,t})}{\omega_{n}\delta^{n}} \\
    \leq&(1+\delta)^3\frac{\mathcal{H}^{n}(B^{n+1}_{\delta}(\xi)\cap\Sigma_{a,t}) }{\omega_{n}\delta^{n}}.
    \end{split} 
 \end{equation}
 Together \eqref{eq: density estimate3} with \eqref{eq: area comparison2}  gives the desired estimate.
Therefore, we complete the proof.
\end{proof}

\subsection{Curvature estimate for area-minimizing  hypersurfaces  in asymptotically flat manifolds}

$\quad$

\subsubsection{Smoothness of $\Sigma_{a,t}$ outside a compact set}

$\quad$

In the following, we establish the smoothness of $\Sigma_{a,t}$ outside a compact set. To achieve this, we prove a density estimate for $\Sigma_{a,t}$ at a point $\xi$ after $M^{n+1}$ is embedded into a higher dimensional Euclidean space $\mathbf{R}^{n+k}$, based on Proposition \ref{prop: density estimate1} established above. The estimate here is purely local and may depend on $\xi$, which yields the regularity of $\Sigma_{a,t}$ in a neighborhood of $\xi$ with an application of Allard's regularity theorem.

Recall that $B^{n+1}_\rho(x)$ denotes the coordinate ball in the AF end 
$E\subset M^{n+1}$, and $\mathcal{B}^{n+1}_\rho(x)$ denotes the geodesic ball 
in $(M^{n+1},g)$. Since our argument also involves $\mathbf{R}^{n+k}$, we 
follow the same notation convention and denote by $B^{n+k}_\rho(x)$ the 
coordinate ball in $\mathbf{R}^{n+k}$. The reader can distinguish 
$B^{n+k}_\rho(x)$ from $B^{n+1}_\rho(x)$ by the superscript: the former is 
defined in $\mathbf{R}^{n+k}$, while the latter is defined in $M^{n+1}$.

Given $\delta\in(0,\frac{1}{100})$ and let $L,\rho_0,a_0$ be as in Proposition \ref{prop: density estimate1}. Fix a point $\xi\in \Sigma_{a,t}\backslash B^{n+1}_{L\rho_0}(O)$ with $d(\xi,\partial\Sigma_{a,t})\geq \frac{a}{2}$, there is an isometric embedding
 $$
 i: (B^{n+1}_{1}(\xi),g)\hookrightarrow \mathbf{R}^{n+k}, ~\text{for some $k = k(n)>1$}. 
 $$
 from Nash embedding theorem \cite[Theorem 3]{Nash1956}.
 By taking $\theta_0$ sufficiently small (possibly dependent on $\xi$), we have
\begin{align*}
         i^{-1}(B_{\rho}^{n+k}(\xi))\subset\mathcal{B}_{(1+\delta)\rho}^{n+1}(\xi)\subset B_{(1+2\delta)\rho}^{n+1}(\xi)
\end{align*}
whenever $\rho<\theta_0$. Here the first inclusion can be seen since the image of $M$ under the inclusion map $i$ in $\mathbf{R}^{n+k}$ is sufficiently close to the tangent space $T_\xi M\subset \mathbf{R}^{n+k}$ near $\xi$, while the second one is obtained from the asymptotical flatness of $E$ and that $|\xi|>\rho_0$. Therefore, we obtain
\begin{equation}\label{eq: comsarison for ES}
    \mathcal{H}^{n}(\Sigma_{a,t}\cap B^{n+k}_{\rho} (\xi))\leq \mathcal{H}^{n}(\Sigma_{a,t}\cap B^{n+1}_{\rho(1+2\delta)} (\xi))
\end{equation}
for any $\rho< \theta_0$. 
\begin{lemma}\label{lem: density estimate in high dimensional Euclidean space}
There exists some $c_n$ depending only on $n$ such that  it holds
 \begin{equation}\label{eq: area ratio}
    \frac{\mathcal{H}^{n}(B^{n+k}_{\rho}(\xi)\cap\Sigma_{a,t})}{\omega_{n}\rho^{n}}
    \leq1+c_n\delta\ \ \  \ \text{for}  \ \rho\ \ \text{ small enough.}
\end{equation}
\end{lemma}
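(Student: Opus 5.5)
The plan is to combine the comparison \eqref{eq: comsarison for ES} with the density bound of Proposition \ref{prop: density estimate1}. Fix $\xi$ as above and take $\rho<\theta_0$ small enough that $\rho(1+2\delta)<1$. We may also assume that $B^{n+1}_{\rho(1+2\delta)}(\xi)\cap\partial\Sigma_{a,t}=\emptyset$; this is automatic for small $\rho$, since $d(\xi,\partial\Sigma_{a,t})\ge a/2$.

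By \eqref{eq: comsarison for ES},
\begin{equation*}
\frac{\mathcal{H}^{n}(B^{n+k}_{\rho}(\xi)\cap\Sigma_{a,t})}{\omega_n\rho^n}\le (1+2\delta)^n\,\frac{\mathcal{H}^{n}(B^{n+1}_{(1+2\delta)\rho}(\xi)\cap\Sigma_{a,t})}{\omega_n((1+2\delta)\rho)^n}=(1+2\delta)^n\,\theta_\xi((1+2\delta)\rho,a).
\end{equation*}
Proposition \ref{prop: density estimate1} applies at the radius $(1+2\delta)\rho$, because $\xi\in\Sigma_{a,t}\setminus B^{n+1}_{L\rho_0}(O)$, $d(\xi,\partial\Sigma_{a,t})\ge a/2$, and the ball misses the boundary. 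It gives $\theta_\xi((1+2\delta)\rho,a)\le 1+20\delta$. Hence the ratio is at most $(1+2\delta)^n(1+20\delta)$.

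It remains to turn this product into a linear bound. Since $\delta<\tfrac1{100}$, convexity of the exponential gives $(1+2\delta)^n\le e^{2n\delta}\le 1+C'_n\delta$, where $C'_n$ depends only on $n$ (for instance $C'_n=100(e^{2n/100}-1)$). Then
\begin{equation*}
(1+C'_n\delta)(1+20\delta)\le 1+(C'_n+20+C'_n/5)\delta=:1+c_n\delta .
\end{equation*}

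The only step needing care is the inclusion $i^{-1}(B^{n+k}_\rho(\xi))\subset B^{n+1}_{(1+2\delta)\rho}(\xi)$ behind \eqref{eq: comsarison for ES}. This holds because the extrinsic distance in $\mathbf{R}^{n+k}$ and the intrinsic distance agree to first order near $\xi$, and the metric $g$ is $\delta$-close to Euclidean for $|\xi|>\rho_0$. The paper already takes this inclusion as given, so here it is used directly. Note that $c_n$ is independent of $\xi$ and $a$, while the admissible range of $\rho$ (through $\theta_0$) may depend on $\xi$, as the statement allows.
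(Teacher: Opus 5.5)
Your proposal is correct and follows the paper's proof. Like the paper, you chain \eqref{eq: comsarison for ES} with Proposition \ref{prop: density estimate1} at radius $(1+2\delta)\rho$ to get $(1+2\delta)^n(1+20\delta)$, then absorb this into $1+c_n\delta$ using $\delta<\frac{1}{100}$. Your explicit constant and your check that the enlarged ball misses $\partial\Sigma_{a,t}$ only spell out what the paper leaves implicit.
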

\begin{proof}
By \eqref{eq: comsarison for ES} and Proposition \ref{prop: density estimate1}, we have
\begin{equation}
   \begin{split}
\frac{\mathcal{H}^{n}(\Sigma_{a,t}\cap B^{n+k}_{\rho}(\xi))}{\omega_{n}\rho^{n}}&
\leq
\frac{\mathcal{H}^{n}(\Sigma_{a,t}\cap B^{n+1}_{(1+2\delta)\rho}(\xi))}{\omega_{n}\rho^{n}}
\le (1+2\delta)^n\frac{\mathcal{H}^{n}(\Sigma_{a,t}\cap B^{n+1}_{(1+2\delta)\rho}(\xi))}{\omega_{n}(1+2\delta)^n\rho^{n}}\\
&\leq(1+2\delta)^n(1+20\delta)\le 1+c_n\delta.
 \end{split}  
\end{equation}
\end{proof}

\begin{lemma}\label{lem: generalized mean curvature}
Let $\Omega\subset (M^{n+1},g)$ be a bounded domain, with an isometric embedding $i:\Omega\longrightarrow \mathbf{R}^{n+k}$. Let $\Sigma\subset M^{n+1}$ be a stationary $n$-varifold, $\mathbf{H}$ the generalized mean curvature of $\Sigma \cap \Omega$ in $\mathbf{R}^{n+k}$, and $\mathbf{A}$ the second fundamental form of the isometric embedding $i:\Omega\longrightarrow\mathbf{R}^{n+k}$. Then for $\mathcal{H}^n$-a.e. $x \in \Sigma$, it holds
$$
\sup_{\Omega\cap \Sigma}|\mathbf{H}|\leq n\sup_{\Omega}|\mathbf{A}|.
$$
\end{lemma}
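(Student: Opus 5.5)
The plan is to compare first variations in $M$ and in $\mathbf{R}^{n+k}$ using the decomposition of the ambient connection along the submanifold $i(\Omega)$. Let $V$ be the varifold associated with $\Sigma\cap\Omega$. Since $i$ is an isometric embedding, $i_\#V$ is an $n$-varifold in $\mathbf{R}^{n+k}$ with the same weight measure. Its approximate tangent planes $P_x$ (defined $\mathcal{H}^n$-a.e.) lie in $T_xM\subset\mathbf{R}^{n+k}$.

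Take a compactly supported smooth vector field $X$ on an open set of $\mathbf{R}^{n+k}$ containing $i(\Omega)$. Along $i(\Omega)$, split it as $X=X^T+X^\perp$ into parts tangent and normal to $T M$. For an orthonormal basis $\{e_j\}_{j=1}^n$ of $P_x$, write $\overline{D}$ for the Euclidean derivative and $\nabla^M$ for the Levi-Civita connection of $g$. We have $\overline{D}_{e_j}X^T=\nabla^M_{e_j}X^T+\mathbf{A}(e_j,X^T)$, and the second term is normal to $TM$. Hence
\[
\operatorname{div}^{\mathbf{R}^{n+k}}_{P_x}X^T=\operatorname{div}^M_{P_x}X^T .
\]
The normal part gives
\[
\langle \overline{D}_{e_j}X^\perp,e_j\rangle=-\langle X^\perp,\mathbf{A}(e_j,e_j)\rangle ,
\]
because $\langle X^\perp,e_j\rangle\equiv0$ along $M$ and $e_j\in TM$. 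Here one extends $e_j$ tangentially, which is legitimate since the divergence only uses derivatives along $P_x\subset T_xM$.

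Summing over $j$ and integrating, stationarity of $\Sigma$ in $M$ kills $\int\operatorname{div}^M_{P}X^T\,d\mu_V$, provided $X^T$ is admissible, i.e. compactly supported in $\Omega$. This leaves
\[
\delta(i_\#V)(X)=-\int\Big\langle X,\sum_{j=1}^n\mathbf{A}(e_j,e_j)\Big\rangle\,d\mu_V .
\]
So $i_\#V$ has locally bounded first variation, and its generalized mean curvature is
\[
\mathbf{H}(x)=\operatorname{tr}_{P_x}\mathbf{A}=\sum_{j}\mathbf{A}(e_j,e_j)
\]
for $\mu_V$-a.e. $x$, with vanishing singular part. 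The pointwise bound $|\mathbf{H}|\le\sum_j|\mathbf{A}(e_j,e_j)|\le n\sup_\Omega|\mathbf{A}|$ then follows immediately.

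The main obstacle is the technical justification of the splitting, not any estimate. Two points need care. First, $X^T$ must be a genuine smooth compactly supported test field on $M$; this requires $X$ supported near a compact part of $i(\Omega)$, and a nearest-point projection onto $i(\Omega)$ to define the splitting smoothly. Second, the divergence formula must hold at points where only an approximate tangent plane exists. Both are handled by working $\mu_V$-a.e. on the approximate tangent planes, as in the standard definition of $\operatorname{div}_P$ (cf. \cite[\S 16]{Simon83}).
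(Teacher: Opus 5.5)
Your proposal is correct and is essentially the paper's argument: split the test field into parts tangent and normal to $M$, kill $\int \operatorname{div}_P X^T$ by stationarity in $M$, and identify the normal contribution via $\langle \overline{D}_{e_j}X^\perp,e_j\rangle=-\langle X^\perp,\mathbf{A}(e_j,e_j)\rangle$ to get $\mathbf{H}=\sum_{j=1}^n\mathbf{A}(e_j,e_j)$ and hence the bound. Your version is slightly more careful than the written proof, which sums only to $n-1$ in one line and loses the sign. You also note that the singular part of the first variation vanishes, which is exactly what is needed to call $\mathbf{H}$ a generalized mean curvature.
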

\begin{proof}
	Let  $X:\Sigma \to \mathbf{R}^{n+k}$ be any $C^1$- vector field on $\Sigma$ with compact support set in $\Omega$,  then
	$$
	X=X^{\perp}+X^{\top},
	$$
where $X^{\perp}$ and $X^{\top}$	 denote the normal and tangential components of $X$  to $(M^{n+1},g)$ respectively. Since $\Sigma$ is area-minimizing in $(M^{n+1},g)$, we have
\[
\int div_{\Sigma} X^{\top} d\mathcal{H}^n=0.
\]
Let $\{\tau_i\}_{1\leq i\leq n}$ be any orthonormal basis for the approximate tangent space $T_x \Sigma$. Then 
$$
 div_{\Sigma}X^{\perp}=\sum_{i=1}^{n}\langle\tau_i, \nabla_{\tau_i} X^{\perp}\rangle
 =\sum_{i=1}^{n}\langle\mathbf{A}(\tau_i, \tau_i), X^{\perp}\rangle
 $$
and
$$
\int div_\Sigma X d\mathcal{H}^n=\int \langle\sum_{i=1}^{n-1}\mathbf{A}(\tau_i, \tau_i), X \rangle d\mathcal{H}^n.
$$
Hence,
$$
\mathbf{H}=\sum_{i=1}^{n}\mathbf{A}(\tau_i, \tau_i),
$$
which implies 
$$
\sup_{B^{n+k}_\rho(x)\cap \Sigma}|\mathbf{H}|\leq n\sup_{B^{n+k}_\rho(x)\cap M^{n+1}}|\mathbf{A}|.
$$
\end{proof}
For given $\delta$,  by Lemma \ref{lem: generalized mean curvature}, we can take 
 $\rho$ small enough such that 
 \[
 \left(\rho^{p-n}\int_{B^{n+k}_\rho(\xi)\cap \Sigma_{a,t}}|\mathbf{H}|^p\right)^{\frac{1}{p}}\leq \delta
 \]
 for $p> n$. In conjunction with the density estimate Lemma \ref{lem: density estimate in high dimensional Euclidean space} and applying the Allard's regularity theorem to $\Sigma_{a,t}\cap B^{n+k}_{\rho}(\xi)\subset M\subset\mathbf{R}^{n+k}$, we conclude: 
\begin{corollary}\label{cor: smooth outside compact set}
    Let the notation be as in the Proposition \ref{prop: density estimate1}. Then for all $a>a_0$, $\Sigma_{a,t}$ is smooth at any point $\xi\in \Sigma_{a,t}\backslash B^{n+1}_{L\rho_0}(O)$ with $d(\xi,\partial\Sigma_{a,t})\geq \frac{a}{2}$.
\end{corollary}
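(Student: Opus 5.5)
The plan is to apply Allard's regularity theorem in the ambient Euclidean space $\mathbf{R}^{n+k}$, via the Nash embedding $i:(B^{n+1}_1(\xi),g)\hookrightarrow\mathbf{R}^{n+k}$ fixed above, and then transfer the regularity back to $M^{n+1}$.

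Fix $a>a_0$ and a point $\xi\in\Sigma_{a,t}\backslash B^{n+1}_{L\rho_0}(O)$ with $d(\xi,\partial\Sigma_{a,t})\ge\frac a2$. Because the distance from $\xi$ to the boundary is at least $a/2$, small balls around $\xi$ avoid $\partial\Sigma_{a,t}$. Hence the image $i(\Sigma_{a,t}\cap B^{n+1}_1(\xi))$ is an integral multiplicity-one $n$-varifold in $\mathbf{R}^{n+k}$ with no boundary near $i(\xi)$. This uses the varifold structure from Proposition \ref{prop: solving plateau}.

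\textbf{Density bound.} By Lemma \ref{lem: density estimate in high dimensional Euclidean space}, for all $\rho<\theta_0$ we have
\[
\frac{\mathcal{H}^n(B^{n+k}_\rho(\xi)\cap\Sigma_{a,t})}{\omega_n\rho^n}\le 1+c_n\delta .
\]

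\textbf{Mean curvature bound.} By Lemma \ref{lem: generalized mean curvature}, the generalized mean curvature of the embedded varifold satisfies $|\mathbf{H}|\le n\sup|\mathbf{A}|=:\Lambda$ near $\xi$. Since $\Lambda$ is finite on the compact set $\overline{B^{n+1}_{1/2}(\xi)}$, for any $p>n$ we get
\[
\Big(\rho^{p-n}\int_{B^{n+k}_\rho(\xi)\cap\Sigma_{a,t}}|\mathbf{H}|^p\Big)^{1/p}\le \Lambda\,\big(\omega_n(1+c_n\delta)\big)^{1/p}\rho .
\]
The right-hand side is less than $\delta$ once $\rho$ is small.

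\textbf{Applying Allard.} Allard's constant $\varepsilon_0=\varepsilon_0(n,k,p)$ is fixed before $\delta$ is chosen. Take $\delta<\frac1{100}$ small enough that $c_n\delta\le\varepsilon_0$ and $\delta\le\varepsilon_0$. This can be arranged at the start, because $\rho_0$, $L$ and $a_0$ were all chosen after $\delta$. Since $\xi\in\spt\Sigma_{a,t}$, Allard's theorem gives that $\Sigma_{a,t}\cap B^{n+k}_{\gamma\rho}(\xi)$ is a $C^{1,\alpha}$ embedded $n$-dimensional graph.

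\textbf{Regularity in $M$.} The image $i(\Sigma_{a,t})$ lies in the smooth submanifold $i(B^{n+1}_1(\xi))$, so $\Sigma_{a,t}$ is a $C^{1,\alpha}$ hypersurface of $M$ near $\xi$. It is also stationary for the area functional of the smooth metric $g$, being area-minimizing. Writing it locally as a graph over its tangent plane in normal coordinates, the graph function is a $C^{1,\alpha}$ weak solution of the minimal surface equation. This is a quasilinear elliptic equation whose coefficients depend smoothly on $g$. Schauder theory and bootstrapping then give $C^\infty$ regularity, so $\xi$ is a regular point.

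\textbf{Main obstacle.} The delicate step is the choice of constants. Allard's threshold must come first, so $\delta$ has to be fixed below it before Proposition \ref{prop: density estimate1} is invoked. One must also check that the radius $\theta_0$, which depends on $\xi$, causes no trouble. It does not, because Allard only needs a single small radius at each point, and the conclusion is pointwise smoothness rather than a uniform estimate.
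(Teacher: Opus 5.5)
Your proposal is correct and follows essentially the same route as the paper: you use the Nash embedding, the density bound of Lemma~\ref{lem: density estimate in high dimensional Euclidean space}, the smallness of the scaled $L^p$ norm of $\mathbf{H}$ for small $\rho$ via Lemma~\ref{lem: generalized mean curvature}, and then Allard's regularity theorem. Two points the paper leaves implicit are handled correctly in your write-up: fixing $\delta$ below Allard's threshold before $\rho_0$, $L$, $a_0$ are chosen, and the elliptic bootstrap from $C^{1,\alpha}$ to $C^\infty$.
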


\subsubsection{Curvature estimate of $\Sigma_{a,t}$ outside a compact set}

$\quad$

In the rest of this subsection, we use the standard point-picking argument and density estimate Proposition \ref{prop: density estimate1} to derive the following curvature estimate.

\begin{lemma}\label{lem: uniform estimate for A}
For given $t\in\mathbf{R}$, there exist  uniform constants $R_1$ and $R_2$, such that for any $a>R_1$ and
    any point $x\in\Sigma_{a,t}\setminus B^{n+1}_{R_2}(O)$ with $d(x,\partial \Sigma_{a,t})\geq\frac{3a}{4}$,
    it holds $|A|(x)\leq \frac{C}{|x|}$ for some constant $C$ depending only on $(M^{n+1}, g)$. Recall $|A|$ denotes the second fundamental form of $\reg\Sigma^n$ in $(M^{n+1},g)$.
\end{lemma}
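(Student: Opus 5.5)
We argue by contradiction, combining a point-picking argument with a blow-up. Suppose the estimate fails. Then there are sequences $a_i\to\infty$ and points $x_i\in\Sigma_{a_i,t}\setminus B^{n+1}_{R_2}(O)$ with $d(x_i,\partial\Sigma_{a_i,t})\ge\frac{3a_i}{4}$ and $|x_i||A|(x_i)\to\infty$. Here we may also let $R_2\to\infty$ along the sequence, or fix $R_2\ge 2L\rho_0$ and derive a contradiction with a fixed large $C$.

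By Corollary \ref{cor: smooth outside compact set}, each $\Sigma_{a_i,t}$ is smooth on the region $\{|x|>L\rho_0,\ d(x,\partial\Sigma)\ge a/2\}$. So $|A|$ is a well-defined continuous function there. We run the standard point-picking on the ball $B^{n+1}_{|x_i|/2}(x_i)$, which lies in this region once $R_2\ge 2L\rho_0$ and $|x_i|\lesssim a_i$; if $|x_i|\gg a_i$ we use instead a ball of radius comparable to $d(x_i,\partial\Sigma)$. Consider
\[
F_i(y)=\bigl(\tfrac{|x_i|}{2}-|y-x_i|\bigr)|A|(y),
\]
which vanishes on the boundary of the ball. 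Choose $y_i$ maximizing $F_i$, and set $\lambda_i=|A|(y_i)$ and $r_i=\frac12(\frac{|x_i|}{2}-|y_i-x_i|)$. Then $\lambda_i r_i\to\infty$, and $|A|\le 2\lambda_i$ on $B^{n+1}_{r_i}(y_i)\cap\Sigma_{a_i,t}$.

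Next we rescale by $\lambda_i$ around $y_i$. The metrics $\lambda_i^2 g$ on $B^{n+1}_{r_i}(y_i)$ converge in $C^\infty_{loc}$ to the Euclidean metric, since $\lambda_i\gtrsim |y_i|^{-1}\cdot(\text{large})$ and $g$ is $C^2$-close to flat at scale $|y_i|$ by asymptotic flatness. The rescaled surfaces have $|A|\le 2$ on balls of radius $\lambda_i r_i\to\infty$ and $|A|=1$ at the origin. They also have uniform area bounds from Proposition \ref{prop: density estimate1}, namely density $\le 1+20\delta$ at every scale up to radius comparable to $a_i$. Hence they converge smoothly, possibly with multiplicity controlled by the density bound, to a complete smooth area-minimizing hypersurface $\Sigma_\infty\subset\mathbf{R}^{n+1}$. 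This limit has $|A|(0)=1$, and its density ratio satisfies $\le 1+20\delta$ at all scales.

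The main point is that this density bound, passed to the limit, forces $\Sigma_\infty$ to be flat. By monotonicity in $\mathbf{R}^{n+1}$, the density ratio $\theta(R)$ of $\Sigma_\infty$ is nondecreasing, bounded above by $1+20\delta<2$, and at least $1$ at each point. Its tangent cone at infinity is therefore an area-minimizing cone of density $<1+20\delta$. For $\delta$ small, Allard's theorem (or the gap for minimizing cones) implies this cone is a hyperplane. Monotonicity then gives $\theta\equiv1$, so $\Sigma_\infty$ is a hyperplane, contradicting $|A|(0)=1$. Equivalently, we can apply Allard's regularity theorem directly to the rescaled surfaces at scale $r_i$: density $\le1+c_n\delta$ and small mean curvature, as in Lemma \ref{lem: density estimate in high dimensional Euclidean space}, give a $C^{1,\alpha}$ graph, hence $r_i|A|(y_i)\le C$ by elliptic estimates.

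The step we expect to be most delicate is verifying that the density bound of Proposition \ref{prop: density estimate1} is available uniformly on the rescaling balls. This means checking that $B^{n+1}_{r_i}(y_i)$ avoids $B^{n+1}_{L\rho_0}(O)$ and satisfies the distance-to-boundary hypothesis $d\ge a/2$. The choice $d(x,\partial\Sigma)\ge\frac{3a}{4}$ together with radii at most $|x|/2$ and at most $a/4$ is designed to guarantee exactly this. The constant $C$ then depends only on $n$, $\delta$ and the AF constants of $(M,g)$.
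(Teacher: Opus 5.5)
Your proposal is correct and follows essentially the same route as the paper: point-picking on $B^{n+1}_{|x|/2}(x)$, rescaling by $|A|(y_i)$, passing to a smooth area-minimizing limit in $\mathbf{R}^{n+1}$ with $|A|(0)=1$, and using Proposition \ref{prop: density estimate1} to force that limit to be a hyperplane. The only real difference is at the end. The paper lets $\delta_k\to 0$ along the sequence, which is possible since $|y_k|,a_k\to\infty$, so the limit has density ratio identically $1$ and is flat by monotonicity alone; you instead fix a small $\delta$ and invoke Allard's theorem or the gap for minimizing cones. Both work, but note that the AF hypotheses only give $C^2$ (not $C^\infty$) convergence of the rescaled metrics, which is enough here.
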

\begin{proof}
    Suppose the lemma is not true, then there exist a sequence of $a_k\rightarrow\infty$ and $x_k\in\Sigma_{a_k,t}$
    with $|x_k|\rightarrow\infty$ and $d(x_k, \partial\Sigma_{a_k,t})\geq \frac{3a_k}{4}$, such that 
    \[
\max_{x\in\Sigma_{a_k,t}\cap B^{n+1}_{\frac{|x_k|}{2}}(x_k)}(\frac{|x_k|}{2}-|x-x_k|)|A|(x):=c_k
\to\infty.
    \]
    Assume the maximum is attained at $y_k$ and set $r_k=\frac{|x_k|}{2}-(|y_k-x_k|)$.
    Rescale the asymptotically flat metric $g_{ij}$ in $B^{n+1}_{r_k}(y_k)$ to the metric $\tilde{g}_{ij}^k$
    using the transformation $\Phi$ given by $x=y_k+\frac{r_k}{c_k}\tilde{x}$, i.e.,
    \[\tilde{g}_{ij}^k=\Phi^*g_{ij}.
    \]
    Then $\tilde{g}_{ij}^k$ is defined in $B^{n+1}_{c_k}(\tilde{O})$ and converge to the Euclidean metric $\delta_{ij}$ on $\mathbf{R}^{n+1}$ locally uniformly in $C^2$ sense by asymptotic flatness, since $|x_k|\to\infty$. 
    The second fundamental form of the rescaled hypersurface $\Phi^*(\Sigma_{a_k,t}\cap B^{n+1}_{r_k}(y_k))$ satisfies $|\tilde{A}|\leq 1$
    with $|\tilde{A}|(\tilde{O})=1$.
Thus, by passing to a subsequence, we have $\Phi^*(\Sigma_{a_k,t}\cap B^{n+1}_{r_k}(y_k))$ converge to a
smooth
area-minimizing hypersurface $\tilde{\Sigma}$. 

On the other hand, by Proposition \ref{prop: density estimate1}, there also exists a sequence of $\delta_k\rightarrow 0$ such that
\begin{equation}\label{eq: asy density esimate}
        \frac{\mathcal{H}^{n}(B^{n+1}_{\rho}(y_k)\cap\Sigma_{a_k,t})}{\omega_{n}\rho^{n}}\le 1+20\delta_k,\mbox{ for }\rho>0 \ \text{with}
        \ \ B^{n+1}_{\rho}(y_k)\cap\partial\Sigma_{a_k,t}=\emptyset.
    \end{equation}
It follows that
\[
\frac{\mathcal{H}^{n}(B^{n+1}_{R}(\tilde{O})\cap\tilde{\Sigma})}{\omega_{n}R^{n}}\equiv 1,
\mbox{ for any}\ \ R> 0.
\]
By the monotonicity formula for minimal hypersurface in Euclidean space,  $\tilde{\Sigma}$ must be the hyperplane, which contradicts with $|\tilde{A}|_{\tilde{\Sigma}}(\tilde{O})=1$.   
\end{proof}

\subsection{Foliation of area-minimizing hypersurfaces in higher dimension}

$\quad$

Let $\Sigma_{r, t}$ be given by Proposition \ref{prop: solving plateau}. Now we use catenoidal-type
barriers devised by Schoen and Yau\cite{SY81} to give a nice control of $\Sigma_{r,t}$.

For $\Lambda>1$ and $1<\beta<\tau-1$, consider the rotational symmetric function $f(r)$ in $\mathbf{R}^{n}\setminus B^{n}_1(O)$ defined by
\begin{equation}\label{eq: symmetric function}
f(r)=\Lambda \int_{r}^{+\infty}(s^{2\beta+2}-\Lambda^2)^{-\frac{1}{2}}ds,\ \ 
\text{where}\ \  r>\Lambda.
\end{equation}
Denote the graph of $f+L$ by $\ggraph_{f+L}$ for $L\in \mathbf{R}$. From the calculation of \cite[equation (2.6)]{HSY24}, $\ggraph_{f+L}$ has positive mean curvature respect to the upward normal whenever $\Lambda>\Lambda_0$, where $\Lambda_0$ is a constant depending only on the geometry of $(E,g|_E)$. Similarly, $\ggraph_{-f+L}$ has positive mean curvature respect to the downward normal.

Arguing  as in \cite[Lemma 2.5]{HSY24}, we obtain for any given $t$, $\Sigma_{r, t}\backslash C_\Lambda$ lies between $\ggraph_{\pm f+t}$ for any $r\gg 1$.
Since $(M^{n+1},g)$ has bounded geometry and $\Sigma_{r, t}\cap \partial C_{\Lambda}$ lies between two hyperplanes $S_{t\pm C\Lambda}$, by applying the monotonicity formula for minimal hypersurfaces we conclude that $\Sigma_{r, t}\cap C_{\Lambda}$ is bounded between two hyperplanes $S_{t\pm C_1\Lambda}$ for some uniform $C_1$.

With this at hand we're able to construct the area-minimizing hypersurfaces in $(M,g)$. Choose a sequence of $r_i$ with $r_i\rightarrow\infty$, then for some fixed $t$, $\Sigma_{r_i,t}$ intersects with a fixed compact set that depends only on $t$. For any bounded domain $W\subset\subset M$, the minimizing property of $\Sigma_{r,t}$ implies $\mathcal{H}^n(\Sigma_{r,t}\cap W)\le\mathcal{H}^n(\partial^*W)$. Hence, from Theorem \cite[Theorem 27.3]{Simon83}, $\Sigma_{r_i,t}$ converges to an area-minimizing hypersurface $\Sigma_t$ as $r_i\to +\infty$ in current sense by taking a subsequence if necessary.  Moreover, since we have nice second fundamental form estimate Lemma \ref{lem: uniform estimate for A}, the convergence is also locally smooth outside $B_{R_2}(O)$, where $R_2$ is as in Lemma \ref{lem: uniform estimate for A}. We summarize these as follows:
 \begin{corollary}\label{cor: bound ms}
 Let $\Sigma_t$ be the area-minimizing hypersurface as above. Then $\Sigma_t$ lies between $S_{t\pm C_1\Lambda}$ for some uniform $C_1$ and can be bounded by $\ggraph_{\pm f+t}$ outside the cylinder $C_{\Lambda}$. Moreover, $\Sigma_t$ is smooth outside a fixed compact set $\mathbf{K} = B_{R_2}(O)$.
 \end{corollary}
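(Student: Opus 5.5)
The corollary has three parts. First, $\Sigma_t$ lies between the hyperplanes $S_{t\pm C_1\Lambda}$. Second, outside $C_\Lambda$ it is trapped between the catenoidal barriers $\ggraph_{\pm f+t}$. Third, it is smooth outside $\mathbf{K}=B_{R_2}(O)$. The plan is to prove all three for the approximating solutions $\Sigma_{r_i,t}$ of Proposition \ref{prop: solving plateau}, uniformly in $i$, and then pass to the limit $\Sigma_t$. The barrier statements pass to the limit by support considerations. Smoothness comes from the curvature estimate of Lemma \ref{lem: uniform estimate for A} together with the density bound of Proposition \ref{prop: density estimate1}.

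\textbf{Step 1: barriers for $\Sigma_{r,t}$.} Fix $\Lambda>\Lambda_0$, so that $\ggraph_{f+L}$ is mean convex with respect to the upward normal and $\ggraph_{-f+L}$ with respect to the downward normal. For $L>t$ large, $\ggraph_{f+L}$ (restricted to $|y|\ge\Lambda$) does not touch $\Sigma_{r,t}$. Indeed, $\Sigma_{r,t}$ is compact, and its boundary $S_{r,t}$ lies at height $t<L\le f(r)+L$.

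Now decrease $L$ toward $t$. There can be no first interior contact point. At such a point the area-minimizing boundary would lie below a mean convex smooth hypersurface. If the contact point is regular, this contradicts the strong maximum principle. If it is singular, use the version for minimizing boundaries: a tangent cone contained in a half-space is a hyperplane, so the point is regular by Allard.

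Contact along the inner cylinder $\partial C_\Lambda$ is excluded because $f$ becomes vertical at $r=\Lambda$, and the mean-convex cylinders $\partial C_s$ control the boundary, exactly as in \cite[Lemma 2.5]{HSY24}. This gives $\Sigma_{r,t}\setminus C_\Lambda$ below $\ggraph_{f+t}$, and symmetrically above $\ggraph_{-f+t}$.

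In particular $\Sigma_{r,t}\cap\partial C_\Lambda\subset\{|z-t|\le f(\Lambda)\}$, and $f(\Lambda)\le C\Lambda$.

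\textbf{Step 2: height bound inside $C_\Lambda$.} Suppose some $x\in\Sigma_{r,t}\cap C_\Lambda$ has $|z(x)-t|>C_1\Lambda$. For $C_1$ large, the coordinate ball of radius $(C_1-C)\Lambda/2$ around $x$ avoids $\partial\Sigma_{r,t}$ and, by Step 1, also avoids $\Sigma_{r,t}\setminus C_\Lambda$.

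Then Lemma \ref{lm: monotonicity ineq2}, using bounded geometry, gives a lower area bound $c\rho^n$ for each unit ball centered on $\Sigma_{r,t}$. The set $\Sigma_{r,t}\cap C_\Lambda$ is connected to $x$ inside the slab, since each component of $\Sigma_{r,t}$ reaches the boundary or the barrier region. So we can place about $(C_1-C)\Lambda$ disjoint unit balls centered on $\Sigma_{r,t}$ along a path from $x$ to the slab, giving area $\gtrsim C_1\Lambda$ inside a region of width $\Lambda$ and height $C_1\Lambda$.

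On the other hand, comparison with the flat disk $S_t\cap C_{\Lambda'}$ bounds the area of $\Sigma_{r,t}\cap C_{\Lambda'}$ by $C\Lambda^n$. In a tall region a cleaner argument is available: the truncation $\Sigma\cap\{z\le t+C\Lambda\}$ plus a piece of the horizontal slice has no larger area. Cutting with $\{z=s\}$ for $s>t+C\Lambda$ and using minimality shows the part above has area at most the slice area, contradicting the linear growth above for $C_1$ large. This is the step I expect to be most delicate, namely the need for a uniform $C_1$ independent of $r$ and $t$. I would handle it by the cut-and-paste comparison with horizontal slices combined with the monotonicity lower bound.

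\textbf{Step 3: limit and smoothness.} Take $r_i\to\infty$. The local area bound $\mathcal H^n(\Sigma_{r_i,t}\cap W)\le\mathcal H^n(\partial^*W)$ and compactness \cite[Theorem 27.3]{Simon83} give a subsequence converging as currents and varifolds to an area-minimizing boundary $\Sigma_t$. Supports converge in the Hausdorff sense by the monotonicity lower density bound, so the bounds of Steps 1 and 2 persist.

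For any $x\in\Sigma_t\setminus B_{R_2}(O)$, eventually $d(x,\partial\Sigma_{r_i,t})\ge 3r_i/4$. Hence Lemma \ref{lem: uniform estimate for A} gives $|A|\le C/|x|$ on $\Sigma_{r_i,t}$ near $x$, and Corollary \ref{cor: smooth outside compact set} makes these surfaces smooth there. Uniform curvature bounds give local graphical representations with $C^{1,\alpha}$ control, which elliptic theory upgrades to smooth convergence. Multiplicity one follows from the density bound $\le 1+20\delta$. Therefore $\Sigma_t$ is smooth outside $\mathbf K=B_{R_2}(O)$, and $R_2$ is independent of $t$.
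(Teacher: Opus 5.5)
Your proposal is correct and follows essentially the same route as the paper. The paper likewise uses the catenoidal barriers of \cite[Lemma 2.5]{HSY24} outside $C_\Lambda$, and bounds the height inside $C_\Lambda$ by playing the monotonicity lower area bound against an area comparison; this gives a $C_1$ depending on $\Lambda$ and the geometry but not on $t$ or $r$. It then concludes by compactness \cite[Theorem 27.3]{Simon83}, with smooth, multiplicity-one convergence outside $B_{R_2}(O)$ coming from Lemma \ref{lem: uniform estimate for A} and the density bound.

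One point in Step 1 is worth stating explicitly. The sliding barrier should be the boundary of the mean-convex region $C_\Lambda\cup\{(y,z):|y|\ge\Lambda,\ z<f(|y|)+L\}$, that is, $\ggraph_{f+L}$ together with the vertical wall $\partial C_\Lambda\cap\{z\ge L+f(\Lambda)\}$. Otherwise $\Sigma_{r,t}$ could leave $C_\Lambda$ above the edge of the graph, and your remark about verticality at $r=\Lambda$ and the mean convexity of the cylinder is exactly what makes this work.
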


\begin{remark}\label{rmk: no drift to infinity for arbitrary ends}
If $(M^{n+1},g)$ has ends other than $E$, the Plateau solution in Proposition 
\ref{prop: solving plateau} may not exist for all $t$. Indeed, when $|t|$ is small, 
$\Sigma_{a,t}$ may drift off to infinity along the arbitrary end due to the lack of 
geometric control. We modify the argument above to show that this cannot happen 
when $|t|$ is sufficiently large. Without loss of generality, we assume $t>0$ in 
the following.

In Proposition \ref{prop: solving plateau}, we solve the Plateau problem 
$\Sigma_{r,t}$ in the AF end 
$E\cong \mathbf{R}^{n+1}\setminus B_1^{n+1}(O)$ with inner obstacle 
$\partial E\cong \partial B_1^{n+1}(O)$ satisfying 
$\partial\Sigma_{r,t} = S_{r,t}$. Then the intersection of $\Sigma_{r,t}$ with 
the interior of $E$ is minimal. If $\Sigma_{r,t}\cap\partial E = \emptyset$, 
then $\Sigma_{r,t}$ is a minimal hypersurface and the previous argument applies 
without change. 

If $\Sigma_{r,t}\cap\partial E \ne \emptyset$, since 
$\Sigma_{r,t}\cap \partial C_\Lambda$ lies between $S_{t\pm C\Lambda}$, the 
monotonicity formula yields
\begin{align}\label{eq:6}
\mathcal{H}^n(\Sigma_{r,t}\cap \{z\le t-C\Lambda\})
\ge \eta (t-C\Lambda)+C_1 ,
\end{align}
where $\eta$ and $C_1$ depend only on the geometry of $(E,g|_E)$; see for example 
\cite[Lemma B.1]{HSY24}. On the other hand, a comparison argument gives
\begin{equation}\label{eq:7}
\begin{split}
\mathcal{H}^n(\Sigma_{r,t}\cap C_\Lambda)
&\le \mathcal{H}^n (\partial C_\Lambda\cap \{t-C\Lambda\le z\le t+C\Lambda\}) 
   + \mathcal{H}^n(S_{t+C\Lambda}\cap C_\Lambda)\\
&\le 2C\Lambda\cdot n\omega_n \Lambda^{n-1}
   + \omega_n\Lambda^n+O(\Lambda^{n-1}) \\
&= ((1+2nC)\omega_n+o(1))\Lambda^n .
\end{split}
\end{equation}
Combining \eqref{eq:6} and \eqref{eq:7}, we conclude that 
$\Sigma_{r,t}\cap \partial E = \emptyset$ provided $t>C_2$, where $C_2$ is a 
constant depending only on $(E,g|_E)$.
\end{remark}
 
Now we are ready to show that 
\begin{proposition}\label{prop: uniqueness of tangent cone}
    Let $\Sigma_t$ be an area-minimizing hypersurface as above. Then the tangent cone of $\Sigma_t$ at the infinity is regular and unique.
\end{proposition}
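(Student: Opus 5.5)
We prove that every tangent cone of $\Sigma_t$ at infinity is the coordinate hyperplane $\{z=0\}$ with multiplicity one, which gives regularity and uniqueness at once. The strategy is a blow-down argument combined with the barrier control from Corollary \ref{cor: bound ms}.

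\textbf{Step 1 (compactness).} Take any sequence $\lambda_i\to\infty$ and consider the rescalings $\lambda_i^{-1}\Sigma_t$ in the rescaled metrics $\lambda_i^{-2}g$. On $\mathbf{R}^{n+1}\setminus\{O\}$ these metrics converge to $\delta_{ij}$ in $C^\infty_{loc}$. Since $\Sigma_t$ is area-minimizing, comparison with $\partial^*W$ gives locally uniform mass bounds, as in the construction of $\Sigma_t$. By \cite[Theorem 34.5]{Simon83}, a subsequence converges as currents and as varifolds to an area-minimizing cone $\mathbf{C}$ in $\mathbf{R}^{n+1}$. Here we use that the singular set and non-Euclidean region of $\Sigma_t$ lie in the fixed compact set $\mathbf{K}$, which collapses to $O$ under the rescaling.

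\textbf{Step 2 (support of the cone).} By Corollary \ref{cor: bound ms}, outside $C_\Lambda$ the surface $\Sigma_t$ lies between $\ggraph_{\pm f+t}$. From \eqref{eq: symmetric function}, $f(r)=O(r^{-\beta})\to0$ as $r\to\infty$, and inside $C_\Lambda$ the surface lies between $S_{t\pm C_1\Lambda}$. Hence $\lambda_i^{-1}\Sigma_t$ lies in the slab $\{|z|\le C\lambda_i^{-1}\}$ away from a shrinking neighbourhood of the $z$-axis, so $\spt\mathbf{C}\subset\{z=0\}$. The constancy theorem then shows $\mathbf{C}=m[|\{z=0\}|]$ for some integer $m\ge1$, where $m\ge1$ because the boundary structure (the limit of $\partial E_{a,t}$) is nontrivial.

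\textbf{Step 3 (multiplicity one).} We exclude $m\ge2$ in one of two ways. The first is that $\Sigma_t=\partial^*E_t$ is a reduced boundary, and the limit of $\lambda_i^{-1}E_t$ is a Caccioppoli set whose boundary is the hyperplane; this has multiplicity one because the sets converge in $L^1_{loc}$ to the half-space $\{z\le0\}$. The second is to pass Proposition \ref{prop: density estimate1}, i.e.\ the bound $\theta\le1+20\delta$, through the limit of the $\Sigma_{r_i,t}$. Either way $m=1$. Since the limit is independent of the chosen subsequence, the tangent cone at infinity is unique and equal to the regular hyperplane $\{z=0\}$.

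\textbf{Main obstacle.} The main difficulty is justifying the compactness in Step 1 near the origin. The rescaled metrics are only asymptotically Euclidean away from $O$, and the other ends and the compact core all collapse into $O$. We would handle this by working on $\mathbf{R}^{n+1}\setminus \overline{B_\varepsilon(O)}$ for every $\varepsilon>0$, using the mass bound from Lemma 65 in \cite{EK23} to control the mass near $O$, and then applying removability of a point for area-minimizing currents. With those two ingredients, the cone obtained on the punctured space extends across the origin.
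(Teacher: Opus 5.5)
Your proposal is correct and follows essentially the same route as the paper: blow down, confine the limit to $\{z=0\}$ using the slab bound of Corollary \ref{cor: bound ms}, apply the constancy theorem, and obtain multiplicity one from the area/density bound. The only real difference is that the paper gets locally smooth subconvergence from the curvature estimate $|A|(x)\le C/|x|$ of Lemma \ref{lem: uniform estimate for A}, whereas you use compactness of minimizing currents; since the constancy theorem identifies the limit anyway, either suffices.
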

	\begin{proof}
	  Since $\Sigma_t$  has local volume bound and $|A|(x)\leq\frac{C}{|x|}$ for $|x|>R_2$ in Lemma \ref{lem: uniform estimate for A}, for any sequence
      $\{r_k\}$ with $r_k\to\infty$, $r^{-1}_k\Sigma_t$ converges locally smoothly to some stable minimal cone $\Gamma$ in $\mathbf{R}^{n+1}$ by passing to a subsequence if necessary. As $\Sigma_t$ lies between two hyperplanes, the limit $\Gamma$ must be supported in the hyperpalne $\{x_{n+1}=0\}$. Thus, the constancy theorem \cite[Theorem 26.27]{Simon83} implies that $\Gamma$ equals $m[|\{x_{n+1}=0\}|]$ for some integer $m$ in current sense. The area estimate for $\Sigma_t$ implies $\Gamma$ has multiplicity one.
	\end{proof}
Applying Theorem 5.7 in \cite{Simon84}(see also the discussion given at pp. 269-270),  $\Sigma_t$ can be represented by some graph 
$\{(y,u_t(y):y\in \mathbf{R}^{n}\}$ outside some compact set with
$|u_t(y)-t|\rightarrow 0$ as $|y|\rightarrow \infty$.
We end this subsection by giving the proof of Theorem \ref{thm: foliation}.
	
	\begin{proof}[Proof Theorem \ref{thm: foliation}]
First we deal with the case that $M$ has only one end. By the argument before, for given $t\in \mathbf{R}$, there exists an area-minimizing hypersurface $\Sigma_t$ asymptotic to the hyperplane $S_t$. The asymptotic estimate for the graph function $u_t$ follows from Proposition 9 in \cite{EK23}. We can use the same argument in \cite{HSY24} to show $\Sigma_t$ is the unique area-minimizing hypersurface asymptotic to the hyperplane $S_t$ for $|t|\gg1$ and these $\Sigma_t$ form $C^1$ foliation (see Proposition 2.13 and
Proposition 2.14 in \cite{HSY24} for details).

 In the scenario where the manifold $M$ possesses arbitrary ends and  has bounded geometry, for each $t\in \mathbf{R}$ and  large $r$, we can use the usual comparison arguments to find a minimal boundary $\Sigma_{r,t}\subset C_r$ with $\partial\Sigma_{r,t}=S_{r,t}$ and least boundary volume. Moreover, for each AF end $E$, $\Sigma_{r,t}\setminus E$ is contained in a fixed compact set of $M^{n+1}$.
 Then, by employing the catenoidal-type barrier argument delineated prior to Corollary \ref{cor: bound ms}, we can still conclude that $\Sigma_{r_i,t}$ lies between $S_{t\pm C\Lambda}$ for $|t|>2C\Lambda$. Then we can follow a similar argument to get the desired result.

 Finally, we consider the case where $M$ has arbitrary ends and does not necessarily have bounded geometry. In this case, $\Sigma_t$ may exist only for $|t|$ sufficiently large; see Remark \ref{rmk: no drift to infinity for arbitrary ends}. Using the same argument as above, we obtain a smooth foliation $\{\Sigma_t\}_{|t|\ge T_0}$.
\end{proof}

\section{Proof of Theorem \ref{thm: 8dim Schoen conj}}
In this section, we  give the proof of Theorem \ref{thm: 8dim Schoen conj}. Since the ambient manifold $(M^{n+1},g)$ may have arbitrary end, we need to introduce a sequence of \textit{free boundary problems with inner obstacle in cylinders} to effectively compensate for the lack of compactness in minimal surfaces caused by arbitrary ends.

Recall $C_{R}$ is the coordinate cylinder  with radius $R$. Let
\begin{align*}
    \partial E\subset V_1\subset V_2\subset\dots
\end{align*}
be any  compact exhaustion of  $M\setminus E$ and let $j:\mathbb{R}\longrightarrow \mathbb{Z}_+$ be any non-decreasing index function with 
\begin{align*}
    \lim_{R\to\infty} j(R) = \infty.
\end{align*}
\begin{figure}
    \centering
    \includegraphics[width=12cm]{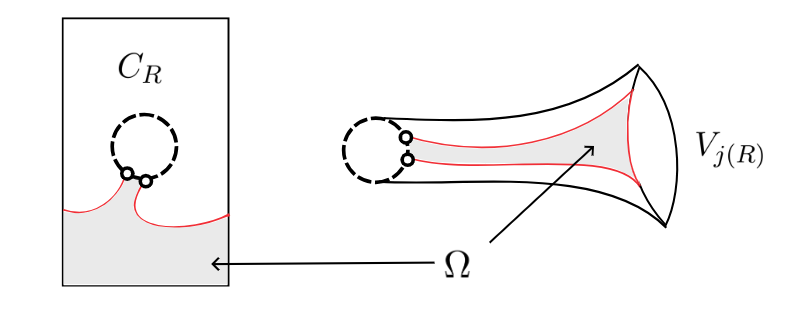}
    \caption{This figure demonstrates the free boundary problem with inner obstacle defined by \eqref{eq: 79}. The left rectangle denotes part of the cylinder $C_R$; the gray area denotes $\Omega$; the red line denotes $\Sigma$,  and $\partial V_{R_i}$ is the inner obstacle lying in arbitrary ends.}
    \label{f5}
\end{figure}
 We define(see Figure $2$)
\begin{equation}\label{eq: 79}
\begin{split}
\mathcal{F}_{R}&:=\{\Sigma=\partial \Omega \backslash \partial C_{R}: \Omega \subset C_{R}\cup V_{j(R)} \text{  is a Caccioppoli set that satisfies}\\
	&\text{ $C_{R} \cap \{z\leq a\}\subset \Omega $ and $\Omega \cap \{z\geq b\}= \emptyset  $ for some $-\infty< a\leq b<\infty$} \}.
	\end{split}
\end{equation}

Suppose there is a sequence of compact exhaustion  $ \partial E\subset V_1\subset V_2\subset\dots$ of  $M\setminus E$  and a sequence $\{R_i\}$ with $R_i\rightarrow\infty$ as $i\rightarrow\infty$ such that the corresponding free boundary minimizing hypersurfaces with inner obstacle $ \Sigma_i:=\Sigma_{R_i}=\partial \Omega_i\setminus \partial C_{R_i}$ exist and satisfy $\Sigma_i\cap K\neq\emptyset$ for some compact $K$. By \cite[Corollary 12.27]{Mag12}, $\Sigma_i$ converges subsequentially to a limit area-minimizing boundary $\Sigma$.
\begin{lemma}\label{lem: asymtotic to hyperplane}
  $\Sigma\cap E$  is asymptotic to a hyperplane  which is parallel to $S_0$ at infinity in Euclidean sense.  
\end{lemma}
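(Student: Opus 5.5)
Here is how I would prove that $\Sigma\cap E$ is asymptotic to a hyperplane parallel to $S_0$. The idea is to show first that $\Sigma\cap E$ is trapped in a slab $\{t_1\le z\le t_2\}$ outside a compact set, with curvature decay $|A|\le C/|x|$ there. Then the argument of Proposition \ref{prop: uniqueness of tangent cone} and Simon's theorem \cite[Theorem 5.7]{Simon84} give the graphical representation over $\{z=t\}$ with $u\to t$.

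\textbf{Step 1: height bound in $E$.} Each $\Sigma_i$ lies in $C_{R_i}\cap\{a_i\le z\le b_i\}$ away from the inner obstacle. Here I would use the catenoidal barriers $\ggraph_{\pm f+L}$ from \eqref{eq: symmetric function}, which are mean convex with respect to the upward and downward normals respectively. Sliding $\ggraph_{f+L}$ down from $L=+\infty$, and using that $\Sigma_i$ is free boundary on $\partial C_{R_i}$ (the cylinders $\partial C_s$ are mean convex, so no interior touching occurs there), I would show $\Sigma_i\setminus C_\Lambda$ stays below $\ggraph_{f+\bar t_i}$ and above $\ggraph_{-f+\underline t_i}$. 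Here $\bar t_i$ and $\underline t_i$ are the max and min heights of $\Sigma_i\cap\partial C_\Lambda$.

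To bound $\bar t_i-\underline t_i$ uniformly, I would use the same area comparison as in Remark \ref{rmk: no drift to infinity for arbitrary ends}, namely \eqref{eq:6} versus \eqref{eq:7}. If $\Sigma_i\cap C_\Lambda$ had large vertical extent, the monotonicity formula (Lemma \ref{lm: monotonicity ineq2}, applied at points far up the cylinder) would force area linear in the extent. That contradicts the comparison bound $O(\Lambda^n)$, obtained by replacing $\Sigma_i\cap C_\Lambda$ with a piece of $\partial C_\Lambda$ plus a flat disk. Since $\Sigma_i\cap K\neq\emptyset$, the heights stay bounded uniformly in $i$. Passing to the limit, $\Sigma\cap E\setminus C_\Lambda$ lies between $\ggraph_{\pm f+t_0}$ and $\ggraph_{\pm f+t_1}$ for fixed $t_0,t_1$. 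In particular $\Sigma\cap E$ lies in a slab far out.

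\textbf{Step 2: density and curvature decay.} The density estimate of Proposition \ref{prop: density estimate1} was proved for Plateau solutions. For $\Sigma$ I would redo it via blow-down. Take a sequence $r_k^{-1}\Sigma$ with $r_k\to\infty$. By Step 1 and compactness for area-minimizing boundaries \cite[Corollary 12.27]{Mag12}, it converges to a minimizing boundary in $\mathbf{R}^{n+1}\setminus\{O\}$ supported in $\{x_{n+1}=0\}$. By constancy, and since it is a boundary of a Caccioppoli set, it is the hyperplane with multiplicity one. Once more I would use the area comparison with slab pieces of $\partial C_{r}$, which gives area at most $(1+o(1))\omega_n r^n$. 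With this, the contradiction argument of Proposition \ref{prop: density estimate1} (Case 2) gives density $\le 1+\delta$ at large scales for points far out. Then Corollary \ref{cor: smooth outside compact set} and the point-picking argument of Lemma \ref{lem: uniform estimate for A} yield smoothness and $|A|(x)\le C/|x|$ outside a compact set of $E$.

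\textbf{Step 3: conclusion.} By Proposition \ref{prop: uniqueness of tangent cone} the tangent cone at infinity is the multiplicity-one hyperplane $\{z=0\}$. By \cite[Theorem 5.7]{Simon84}, $\Sigma\cap E$ is a graph $(y,u(y))$ outside a compact set with $u\to t$ for some $t$. Thus it is asymptotic to $S_t$, which is parallel to $S_0$.

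I expect the main obstacle to be Step 1, namely the uniform height control in the free boundary setting. The barriers must be compatible with the free boundary condition on $\partial C_{R_i}$, and the comparison surfaces must stay in the admissible class $\mathcal{F}_{R_i}$ despite the obstacle $V_{j(R_i)}$. I would first try the maximum principle against $\ggraph_{\pm f+L}\cap C_{R_i}$, using the fact that the barrier meets $\partial C_{R_i}$ almost orthogonally for large $R_i$.
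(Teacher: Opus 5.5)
There is a genuine gap in Step 1, and Steps 2 and 3 depend on it.

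\textbf{The barriers fail at the free boundary.} The catenoidal barriers work in Section 2.4 only because $\partial\Sigma_{r,t}=S_{r,t}$ is pinned at height $t$, strictly below $\ggraph_{f+t}$ on $\partial C_r$. So when $\ggraph_{f+L}$ slides down, the first contact cannot occur on $\partial C_r$. For $\Sigma_i$ the boundary on $\partial C_{R_i}$ is free, so the first contact can occur at a point $p\in\partial\Sigma_i\cap\partial C_{R_i}$.

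At such a $p$, $\Sigma_i$ meets $\partial C_{R_i}$ orthogonally, with conormal $\partial_r$. The barrier instead contains the direction $\partial_r+f'(R_i)\partial_z$ with $f'(R_i)<0$, so it descends toward the wall. Moving inward from $p$, $\Sigma_i$ leaves the wall horizontally to leading order, while the barrier rises with slope $|f'(R_i)|$. A transversal contact from below at a boundary point is therefore entirely consistent, and no maximum principle, interior or free-boundary, yields a contradiction. A free-boundary barrier would have to meet $\partial C_{R_i}$ tilted the other way. That the angle is close to $\pi/2$ does not help; only its sign matters.

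\textbf{The area comparison does not rescue it.} In Remark \ref{rmk: no drift to infinity for arbitrary ends}, the trace $\Sigma_{r,t}\cap\partial C_\Lambda$ was already confined to $S_{t\pm C\Lambda}$ by the barriers. Hence \eqref{eq:7} is $O(\Lambda^n)$ independently of the depth $t$ that \eqref{eq:6} measures. In your setting, the competitor $\partial C_\Lambda\cap\{\underline t_i\le z\le \bar t_i\}$ plus a disk has area about $n\omega_n\Lambda^{n-1}(\bar t_i-\underline t_i)$. This grows linearly in the same quantity as the monotonicity lower bound, so nothing is contradicted.

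\textbf{Consequences for Steps 2 and 3.} Without the slab, your blow-down of $\Sigma$ need not lie in $\{x_{n+1}=0\}$. The bound $(1+o(1))\omega_n r^n$ for $\Sigma\cap C_r$ is also unavailable. Finally, Proposition \ref{prop: uniqueness of tangent cone} cannot be invoked, since its proof uses exactly the slab.

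\textbf{What the paper does instead.} The paper never proves a uniform slab for $\Sigma_i$; it uses only the sublinear bound $Z(\Sigma_i)=o(R_i)$ from \cite[Lemma 3.2]{HSY24}.
\begin{enumerate}
\item It notes that the $\Sigma_i$ share the two properties that drive Section 2: $\mathcal{H}^n(\Sigma_i\cap E)\le(1+o(1))\omega_nR_i^n$, and a flat blow-down at scale $R_i$.
\item Therefore Proposition \ref{prop: density estimate1} and Lemma \ref{lem: uniform estimate for A} rerun for the $\Sigma_i$ at far-out points. Passing to the limit gives $|A|(x)\le C/|x|$ on $\Sigma\cap E$.
\item The argument of \cite[Proposition 8]{EK23} then shows that, outside a compact set, $\Sigma\cap E$ is a graph over some hyperplane that is a priori tilted, by a rotation $S\in SO(n+1)$.
\item Only afterwards is the tilt excluded, using $Z(\Sigma_i)=o(R_i)$.
\end{enumerate}
The missing ideas are these. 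Take the density and curvature estimates from the approximating $\Sigma_i$ at scale $R_i$, not from a slab for $\Sigma$. Use an asymptotic result that allows a tilted limit plane, and fix its direction with the sublinear height bound.
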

\begin{proof}
    For $A\subset M$, we define the height $Z(A)$ of $A$ by
\begin{equation}
    Z(A)=\left\{
    \begin{aligned}
     &\sup_{p\in A\cap E}z(p)-\inf_{p\in A\cap E}z(p),\ \ A\cap E\neq\emptyset,\\
    &0,\ \ A\cap E=\emptyset.   
    \end{aligned}
    \right.
\end{equation}
By the same argument as in \cite[Lemma 3.2]{HSY24}, $Z(\Sigma_i)=o(R_i)$. 

Now we can argue as in Section 2 to prove the free boundary solution $\Sigma_i$ has similar properties as the Plateau solutions considered in Section 2. In fact, the argument in Section 2 made use of two key properties of the Plateau solutions $\Sigma_{a,t}$, which also hold true for the free boundary solution $\Sigma_i$:
\begin{itemize}
    \item 
        $\mathcal{H}^n(\Sigma_{a_i,t}\cap E)\le (1+o(1))\omega_na_i^n$ as $a_i\to\infty$.
    \item The blow down limit $a_i^{-1}(\Sigma_{a_i,t}\cap E)$ is a punctured unit ball $(B_1^{n+1}(O)\cap S_0)\backslash \{O\}$.
\end{itemize}
Therefore, from the argument in Proposition \ref{prop: density estimate1}, we can show that for given $\delta>0$, there exists some $c_{\delta}$ depending on $(M^{n+1}, g)$ and $\delta$, such that for 
any $\xi\in\Sigma_i\cap E$ with $|\xi|\geq c_{\delta}$ and $d(\xi,\partial\Sigma_i)\geq \frac{R_i}{2}$,
it holds
\[
\frac{\mathcal{H}^{n}(B^{n+1}_{\rho}(\xi)\cap\Sigma_i)}{\omega_{n}\rho^{n}}\leq 1+20\delta
\]
 for any $\rho$ with $B^{n+1}_{\rho}(\xi)\cap\partial\Sigma_i\cap(\partial C_{R_i})=\emptyset$ when $i$ is large enough. Then we use the same argument as in  Lemma \ref{lem: uniform estimate for A} to show the existence of a constant $C$ such that
\begin{align}\label{(1)}
    |A|(x)\leq \frac{C}{|x|}\text{ for }x\in\Sigma\cap E\text{ with }|x|\gg1.
\end{align}

Thus, we can apply the argument in the proof of \cite[Proposition 8]{EK23} to conclude that 
there exists some rotation $S\in SO(n+1)$ and constant $a\in \mathbf{R}$ such that  outside some compact set, $S(\Sigma\cap(E\setminus B^{n+1}_{r_o}(O)))=\{(y, u(y)):y\in\mathbf{R}^{n}\} $ with $u(y)$ satisfying $|u(y)-a|=O(|y|^{1-\tau+\varepsilon})$ for any $\varepsilon>0$.
The dimensional assumption $3 \le n+1 \le 7$ in \cite{EK23} was used solely to obtain curvature estimates for $\Sigma\cap E$ via the Schoen–Simon curvature estimate \cite{Schoen1981}. In our setting, although we allow arbitrary dimensions, the curvature estimate has already been established in \eqref{(1)}. Therefore, their argument applies verbatim. 
Note that $Z(\Sigma_i)=o(R_i)$. Then  the rotation $S$ must be  the identity. We complete the proof.
\end{proof}

Note $\Sigma$ may also have arbitrary ends and even contain isolated singular  set $\mathcal{S}$ when $n=7$. 
We recall the following notion of \textit{strong stability}. This notion was first introduced by Schoen in his proof of the PMT for AF manifolds with dimensions no greater than $7$ via dimension reduction by non-compact minimal hypersurfaces \cite{Schoen1989}, and later studied in \cite{Carlotto16,EK23,HSY24,HSY26}. More specifically, we have
\begin{definition}\label{defn: strong test function}
		Let $(N^{n},g)$ be  a manifold (not necessarily complete) containing an AF end $E$. We say that a locally Lipschitz function $\phi$ is an \textit{asymptotically constant test function} on $N$, if it is supported on a neighborhood $\mathcal{U}$ of $E$ such that $\mathcal{U}\Delta E$ is compact, satisfying $\lim\limits_{|x|\to\infty, x\in E}\phi = 1$ and $\phi-1\in W^{1,2}_{-q}(\mathcal{U})$, for some $q> \frac{n-2}{2}$. i.e.
		$$
		\int_{\mathcal{U}}|\phi-1|^2 r^{2q-n}d\mu_g+\int_{\mathcal{U}}|\nabla\phi|^2 r^{2(q+1)-n}d\mu_g<\infty,
		$$
		where $r$ denotes a positive function on $\mathcal{U}$ and $r=|x|$ on $E$.
        \end{definition}

\begin{definition}\label{defn: strongly stable hypersurface}
    Let $\Sigma$ be an area-minimizing hypersurfcae in $M^{n+1}$ which is  asymptotic to a  coordinate hyperplane in the AF end $E$ of $M^{n+1}$ with $\Sigma\cap E$ being an AF end for $\Sigma$. We say $\Sigma$ is  strongly stable, if for any function $\phi\in \Lip_{loc}(\Sigma\backslash\mathcal{S})$ which is either compactly supported or asymptotically constant in the sense of Definition \ref{defn: strong test function}, there holds
	\begin{equation}\label{eq: strongly stability}
		\int_{\Sigma\backslash\mathcal{S}}|\nabla\phi|^2-(Ric(\nu,\nu)+|A|^2)\phi^2\geq 0. 
	\end{equation}
    Here $\nu$ is the unit normal vector of $\Sigma^{n}$ in $(M^{n+1},g)$.
\end{definition}

 We now proceed to present the proof of Theorem \ref{thm: 8dim Schoen conj} by adopting the strategy in \cite{HSY24} to establish the existence of the strongly stable hypersurface $\Sigma_p$ passing through $p$ for any $p\in E$ with $|z(p)|\gg1$. The new point lies in the application of the following Theorem \ref{prop: rigidity for minimal surface}  to help us show that all those $\Sigma_p$ are totally geodesic and isometric to $\mathbf{R}^n$.

\begin{theorem}[Theorem 1.1 in \cite{HSY26}]\label{prop: rigidity for minimal surface}
		Let $(M^{n+1},g)$ be an AF manifold with arbitrary ends, an AF end  $E$ of asymptotic order  $\tau>n-2$  and $R_g\ge 0$. Let $\Sigma^{n}\subset M^{n+1}$ be an area-minimizing boundary in $M^{n+1}$ with isolated singular set $\mathcal{S}$. Assume $\Sigma$ is  strongly stable, and one of the following holds:
		
		(a) $n+1\le 8$
		
		(b) $\Sigma\backslash\mathcal{S}$ is spin,  and the tangent cone at each point in $\mathcal{S}$ has isolated singularity.
		
		$\quad$
		
		(1) If $\Sigma\backslash E$ is compact, then we have $\mathcal{S} = \emptyset$, and $\Sigma$ is isometric to $\mathbf{R}^{n}$, with $R_g = |A|^2 = Ric(\nu,\nu) = 0$ along $\Sigma$.
		
		(2) If there exists $U_1,U_2\subset M$ with $E\subset U_1\subset U_2$, such that $U_i\backslash E$ is compact ($i = 1,2$) and  $R_g> 0$ on $U_2\backslash U_1$, then $\Sigma\subset U_1$, and the same conclusion in (1) holds. 
	\end{theorem}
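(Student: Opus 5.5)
The plan is to reduce to a positive mass statement on $\Sigma$ itself, in the spirit of Schoen--Yau dimension reduction. I will exploit the asymptotically constant test functions permitted by strong stability (Definition \ref{defn: strongly stable hypersurface}) and the fast decay $\tau>n-2$.

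\emph{Rewriting stability.} Since $\Sigma$ is minimal, the Gauss equation gives
\[
Ric(\nu,\nu)+|A|^2=\tfrac12\left(R_g-R_\Sigma+|A|^2\right).
\]
Inequality \eqref{eq: strongly stability} therefore becomes
\[
\int_{\Sigma\setminus\mathcal S}|\nabla\phi|^2+\tfrac12 R_\Sigma\phi^2\ \ge\ \tfrac12\int_{\Sigma\setminus\mathcal S}(R_g+|A|^2)\phi^2\ \ge 0 .
\]
This holds both for compactly supported $\phi$ and for asymptotically constant $\phi$. Because $c_n=\frac{n-2}{4(n-1)}<\frac12$, the inequality shows that the conformal Laplacian $L=-\Delta+c_nR_\Sigma$ is coercive in a weighted sense. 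I then solve $Lu=0$ on $\Sigma\setminus\mathcal S$ with $u\to1$ at the AF end $\Sigma\cap E$. The decay of the graph function in Theorem \ref{thm: foliation}(2) and Lemma \ref{lem: asymtotic to hyperplane} shows that the induced metric on $\Sigma\cap E$ is AF of order $\tau-1+o(1)>\frac{n-2}{2}$. Since $\tau>n-2$, the metric perturbation decays faster than $|y|^{2-n}$, so $m(g_\Sigma)=0$. To obtain $u$, I would take an exhaustion by compact domains, impose Dirichlet data $1$, and get uniform bounds from the stability inequality. On the other ends, or near $\mathcal S$, I would use zero Dirichlet data or cut-offs so that $u$ is allowed to decay there. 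Strict positivity of $u$ comes from the maximum principle and Harnack.

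\emph{Mass comparison.} The conformal metric $\hat g=u^{4/(n-2)}g_\Sigma$ is scalar flat and AF, and $u=1+m_u|y|^{2-n}+\dots$. Integrating $uLu=0$ against the asymptotics and applying the stability inequality with $\phi=u$ gives
\[
m(\hat g)=m(g_\Sigma)-C_n\int_\Sigma\left(|\nabla u|^2+c_nR_\Sigma u^2\right)\le -C_n'\int_\Sigma(R_g+|A|^2)u^2\le 0 .
\]
The intermediate step uses $\frac12-c_n>0$ to absorb the part of $|\nabla u|^2$ that is not controlled. On the other side, the positive mass theorem for AF manifolds with arbitrary ends applies to $(\Sigma\setminus\mathcal S,\hat g)$: the isolated singular points become incomplete or arbitrary ends after a conformal blow-up. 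In case (a) I use $n\le 7$; in case (b) I use the spin version together with the isolated-singularity cone hypothesis. This gives $m(\hat g)\ge0$. Hence every inequality is an equality. Consequently $R_g=|A|^2=0$ along $\Sigma$, $\nabla u\equiv0$ so $u\equiv1$, $R_\Sigma=0$, and equality holds in stability, which gives $Ric(\nu,\nu)=0$. The rigidity part of the positive mass theorem then makes $\Sigma$ isometric to $\mathbf R^n$; in particular $\mathcal S=\emptyset$, since a flat complete manifold has no singular points or extra ends. This proves (1).

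\emph{Case (2).} I run the same argument but construct $u$ only on $\Sigma\cap U_2$. The positivity $R_g>0$ on $U_2\setminus U_1$ provides a strict gain $\frac12\int(R_g)\phi^2$ on the shielding region. This gain lets me absorb the error from a cut-off across $U_2\setminus U_1$ and apply the shielded positive mass theorem, in the manner of Lohkamp / Lesourd--Unger--Yau. The equality case forces $R_g=0$ wherever $\Sigma$ meets $U_2$. Since $R_g>0$ on $U_2\setminus U_1$, this gives $\Sigma\cap(U_2\setminus U_1)=\emptyset$, and connectedness then yields $\Sigma\subset U_1$. The conclusion follows as in (1).

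\emph{Main obstacle.} I expect the hardest step to be solving $Lu=0$ with $u\to1$ and controlling $u$ near $\mathcal S$ and along the arbitrary ends of $\Sigma$. This includes getting the weighted Sobolev estimates from strong stability, $u>0$, and the exact mass expansion. I would first try the approach of proving a weighted Poincaré-type inequality from the strict margin $\frac12-c_n$, and then use a Fredholm argument in $W^{1,2}_{-q}$.
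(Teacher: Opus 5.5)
The paper gives no proof of this statement. It is quoted from \cite{HSY26} and used as a black box in the proof of Theorem \ref{thm: 8dim Schoen conj}, so your proposal has to stand on its own. Its smooth skeleton is fine and is the standard Schoen--Yau argument:
the Gauss-equation rewrite; the bound $\int|\nabla\phi|^2+c_nR_\Sigma\phi^2\ge(1-2c_n)\int|\nabla\phi|^2+c_n\int(R_g+|A|^2)\phi^2$; $m(g_\Sigma)=0$ for $\tau>n-2$; and the equality bookkeeping. On $m(g_\Sigma)=0$, note that the induced metric involves $Du$, not $u-t$, so its order is essentially $\tau$, not $\tau-1$. Also, once $|A|\equiv 0$ on $\Sigma\setminus\mathcal S$, every tangent cone is a multiplicity-one hyperplane and Allard gives $\mathcal S=\emptyset$ directly. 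The gaps sit exactly at the two points the theorem is about.

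\textbf{Singular set.} You get $m(\hat g)\ge 0$ from a positive mass theorem with arbitrary ends on $(\Sigma\setminus\mathcal S,\hat g)$. Those theorems (\cite{LUY21}, and Cecchini--Zeidler in the spin case) require completeness, and your $u$ does not provide it.

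Zero Dirichlet data at points is vacuous for $n\ge 3$, since points have zero capacity. The solution you can actually build, and the one your mass identity needs for integration by parts, is the $W^{1,2}$ solution. Near $p\in\mathcal S$ with tangent cone $C(\Lambda)$ it behaves like $r^{-\gamma_-}\varphi_1$, where $\gamma_\pm=\frac{n-2}{2}\pm\sqrt{\frac{(n-2)^2}{4}+\mu_1}$ and $\mu_1$ is the first eigenvalue of $-\Delta_\Lambda-c_n|A_\Lambda|^2$. Since $\gamma_-<\frac{n-2}{2}$, the metric $\hat g\approx r^{-4\gamma_-/(n-2)}(dr^2+r^2g_\Lambda)$ is again conical and lies at finite distance from $p$.

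To turn $p$ into a complete end you need $u\gtrsim r^{-(n-2)/2}$, i.e.\ the $r^{-\gamma_+}$ branch. This rests on $\mu_1>-\frac{(n-2)^2}{4}$, which follows from stability of the smooth-link tangent cone together with $c_n<1$. That is where the tangent-cone hypothesis in (b) naturally enters, and your argument never uses it. But such a $u$ is not in $W^{1,2}$ near $p$, and $\int_{\partial B_\varepsilon(p)}u\,\partial_\nu u\sim\varepsilon^{n-2-2\gamma_+}\to\infty$, so your mass identity breaks. You need an extra step: for instance, apply the complete PMT to $(u_0+\varepsilon G)^{4/(n-2)}g_\Sigma$, with $G>0$ an $L$-Green's function singular at $\mathcal S$, and let $\varepsilon\to 0$.

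\textbf{Case (2).} Before you know $\Sigma\subset U_1$, $\Sigma$ may have ends in $M\setminus U_2$. There only the spectral inequality holds and $R_\Sigma$ may be negative. A solution of $Lu=0$ that decays there can make $\hat g$ incomplete.

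Cutting $u$ off across $U_2\setminus U_1$ does not help. Outside $U_2$ you are left with one of two situations. Either you keep $g_\Sigma$ with $R_\Sigma$ possibly negative, and the complete arbitrary-ends PMT does not apply. Or you truncate, and you are in the incomplete setting.

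The shielding principle of \cite{LUY21} covers the incomplete setting only under a quantitative hypothesis, roughly scalar curvature $\ge\sigma$ on a band of width $\gtrsim\sigma^{-1/2}$. Here you only have $R_g>0$ on $U_2\setminus U_1$. Neither the lower bound inherited on $\Sigma$ nor the width of $\Sigma\cap(U_2\setminus U_1)$ is controlled, and $\sigma\cdot\mathrm{width}^2$ is scale invariant, so no normalization fixes this. Consequently you never reach $m\ge0$, hence never the equality case, and $\Sigma\subset U_1$ does not follow. You need a different mechanism for the arbitrary ends of $\Sigma$, one that keeps completeness and nonnegative scalar curvature without any width condition.
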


        \begin{proof}[Proof of Theorem \ref{thm: 8dim Schoen conj}]
        We take the contradiction argument. Suppose not, then by the argument in the paragraph behind \eqref{eq: 79} there exists an area-minimizing boundary $\Sigma$ such that $\Sigma\cap E$  is asymptotic to a hyperplane  which is parallel to $S_0$ at infinity in Euclidean sense. We divide our proof into 3 steps.
        
        \textbf{Step 1: $\Sigma$ is strongly stable in the sense of Definition \ref{defn: strongly stable hypersurface}.} 
        
The strong stability of $\Sigma$ follows from the calculation in \cite{Schoen1989}, see also \cite[Lemma 3.6]{HSY24}.

$\quad$
       
        \textbf{Step 2: Construct  a family of  area minimizing boundaries in a  new class of free boundary under perturbed metrics.}

        $\quad$
        
        We first apply Theorem \ref{prop: rigidity for minimal surface} to show in both cases stated in Theorem \ref{thm: 8dim Schoen conj}, $\Sigma\backslash E$ is a compact set and $\Sigma$ is isometric to $\mathbf{R}^{n}$.

        \begin{enumerate}
            \item If $(M,g)$ only has AF ends, then it is geometrically bounded. This implies $\Sigma\backslash E$ is a compact set. Thus, we can apply Theorem \ref{prop: rigidity for minimal surface} (1) to conclude $\Sigma$ is isometric to $\mathbf{R}^{n}$.

            \item If $(M,g)$ has some arbitrary end other than $E$, we then apply Theorem \ref{prop: rigidity for minimal surface} (2) to conclude  $\Sigma\backslash E$ is a compact set, under the scalar curvature assumption $R_g>0$ in $U_2\backslash \bar{U}_1$. Therefore, Theorem \ref{prop: rigidity for minimal surface} (2) yields  $\Sigma$ is isometric to $\mathbf{R}^{n}$.
        \end{enumerate}
        
         Since $\Sigma$ and $\Sigma_{i}$ have multiplicity one and $\Sigma_i$ converges to $\Sigma$ in varifold sense, using the Allard's regularity theorem, we know that for any bounded domain $L$, $\Sigma_i\cap L$ must be smooth for sufficiently large $i$. Note that $\Sigma$ separates the AF end $E$ into two parts: the upper part $E_+$ and the lower part $E_-$. Let $p$ be a point in $E_+$ with $|z(p)|\geq T_0$, where $T_0$ is given by Theorem \ref{thm: foliation}. Then as in \cite[Lemma 3.3]{HSY24}, for any $r_0>0$, there exists $0<r<r_0$, an open set $W\subset M$ with compact closure that satisfies $W\cap \Sigma\ne\emptyset$, and a family of Riemannian metrics $\lbrace g(s)\rbrace_{s\in[0,1]}$, such that the following holds

            (1) $g(s)\to g$ smoothly as $s\to 0$,

            (2) $g(s)=g$ in $M\backslash W$,

            (3) $g(s)<g$ in $W$,

            (4) $R(g(s))>0$ in $\lbrace x\in W: \dist(x,p)>r\rbrace$,

            (5) For $i$ sufficiently large, $\Sigma_i$ is weakly mean convex and strictly mean convex at one interior point under $g(s)$ with respect to the normal vector pointing into $E_-$.

        Let $C_{R_i}^+$ denote the closure of the region in $C_{R_i}$ that lies  above $\Sigma_i\cap E$ and
         \begin{equation}\label{eq:Gr}
        	\begin{split}
        \mathcal{G}_{R_i} = &\lbrace \Sigma = \partial\Omega\setminus  \partial C_{R_i}^+  : ~\Omega \mbox{ is a Caccioppoli set in } C_{R_i}^+\cup V_i,\\
      &\Sigma_i\subset\Omega\quad\text{and}\quad\Omega\cap\{z\ge b\} = \emptyset \mbox{ for some }b\rbrace
      \end{split}
    \end{equation}
    Similar to \cite[Lemma 3.4]{HSY24}, we are able to prove that there exists a free boundary minimal hypersurface $\Sigma_i(s)$ which minimizes the volume in $\mathcal{G}_{R_i}$ under the metric $g(s)$.  Here, a slight difference is that $\Sigma_i$ may have singularity. However, thanks to \cite[Theorem B.1]{Wang24}, $\Sigma_i$ remains as an effective barrier. Moreover, by the argument in \cite[Lemma 3.5]{HSY24}, for a fixed $s$, there exists a compact set $W_0\subset W$, such that for all sufficiently large $i$, there holds $\Sigma_i(s)\cap W_0\ne\emptyset$.
    Here the free boundary minimal surfaces $\Sigma_i(s)$
		may also have singularities. Now by the standard result in geometric measure theory, $\Sigma_i(s)$ will converge to a limit area-minimizing hypersurface $\Sigma(s)$.

$\quad$
        
    \textbf{Step 3:
    Show that $m_{ADM}(M,g,E)=0$,  which contradicts the assumption that $m_{ADM}(M,g,E)\neq0$.}
    
    By \cite[Lemma 3.6]{HSY24}, $\Sigma(s)\cap E$ is asymptotic to a hyperplane  which is parallel to $S_0$ at infinity in Euclidean sense and $\Sigma(s)$ is strongly stable. By the desingularizing Theorem \ref{prop: rigidity for minimal surface}, we conclude that $\Sigma(s)\cap B^{n+1}_r(p)\ne\emptyset$. By letting $s\rightarrow 0$ and $r\rightarrow 0$, we obtain a strongly stable area-minimizing hypersurface $\Sigma_p$ in $(M^{n+1}, g)$ that passes through $p$. The uniqueness result in Theorem \ref{thm: foliation} shows that $\Sigma_p$ coincides with $\Sigma_t$. Again due to Theorem \ref{prop: rigidity for minimal surface}, $\Sigma_p$ is isometric to $\mathbf{R}^{n}$ with $R_g = |A|^2 = Ric(\nu,\nu) = 0$ along $\Sigma$. Combined with \cite[Proposition A.3]{HSY24}, this implies that the region  in the AF end $E$ that lies above $\Sigma_{T_0}$ is isometric to $\mathbf{R}^{n+1}_+$. Similarly, the region  in the AF end $E$ that lies below $\Sigma_{-T_0}$ is also isometric to $\mathbf{R}^{n+1}_-$. By the definition of the ADM mass, we see that 
    $m_{ADM}(M,g,E)=0$ and get the desired contradiction.
        \end{proof}

{\bf Data availability statement.} Data sharing not applicable to this article as no datasets were generated or analysed during the current study.

{\bf Conflict of interest.} On behalf of all authors, the corresponding author states that there is no conflict of interest.

\bibliographystyle{alpha}

\bibliography{Positive}

@book{CM11,
	author = {Colding, Tobias Holck and Minicozzi, II, William P.},
	doi = {10.1090/gsm/121},
	isbn = {978-0-8218-5323-8},
	mrclass = {53A10 (35J93 49Q05)},
	mrnumber = {2780140},
	mrreviewer = {Andrew\ Bucki},
	pages = {xii+313},
	publisher = {American Mathematical Society, Providence, RI},
	series = {Graduate Studies in Mathematics},
	title = {A course in minimal surfaces},
	url = {https://doi.org/10.1090/gsm/121},
	volume = {121},
	year = {2011}}

@article {SY79PNAS,
    AUTHOR = {Schoen, Richard M. and Yau, Shing Tung},
     TITLE = {Complete manifolds with nonnegative scalar curvature and the
              positive action conjecture in general relativity},
   JOURNAL = {Proc. Nat. Acad. Sci. U.S.A.},
  FJOURNAL = {Proceedings of the National Academy of Sciences of the United
              States of America},
    VOLUME = {76},
      YEAR = {1979},
    NUMBER = {3},
     PAGES = {1024--1025},
      ISSN = {0027-8424},
   MRCLASS = {58D30 (53C50 83C99)},
  MRNUMBER = {524327},
MRREVIEWER = {Mauro\ Francaviglia},
       DOI = {10.1073/pnas.76.3.1024},
       URL = {https://doi.org/10.1073/pnas.76.3.1024},
}

@book {Giu1984,
    AUTHOR = {Giusti, Enrico},
     TITLE = {Minimal surfaces and functions of bounded variation},
    SERIES = {Monographs in Mathematics},
    VOLUME = {80},
 PUBLISHER = {Birkh\"auser Verlag, Basel},
      YEAR = {1984},
     PAGES = {xii+240},
      ISBN = {0-8176-3153-4},
   MRCLASS = {58E12 (49F10 53A10)},
  MRNUMBER = {775682},
MRREVIEWER = {Helmut\ Kaul},
       DOI = {10.1007/978-1-4684-9486-0},
       URL = {https://doi.org/10.1007/978-1-4684-9486-0},
}

@book {Simon83,
    AUTHOR = {Simon, Leon},
     TITLE = {Lectures on geometric measure theory},
    SERIES = {Proceedings of the Centre for Mathematical Analysis,
              Australian National University},
    VOLUME = {3},
 PUBLISHER = {Australian National University, Centre for Mathematical
              Analysis, Canberra},
      YEAR = {1983},
     PAGES = {vii+272},
      ISBN = {0-86784-429-9},
   MRCLASS = {49-01 (28A75 49F20)},
  MRNUMBER = {756417},
MRREVIEWER = {J.\ S.\ Joel},
}

@article {Carlotto16,
    AUTHOR = {Carlotto, Alessandro},
     TITLE = {Rigidity of stable minimal hypersurfaces in asymptotically
              flat spaces},
   JOURNAL = {Calc. Var. Partial Differential Equations},
  FJOURNAL = {Calculus of Variations and Partial Differential Equations},
    VOLUME = {55},
      YEAR = {2016},
    NUMBER = {3},
     PAGES = {Art. 54, 20},
}

@article{Liu13,
	author = {Liu, Gang},
	journal = {Inventiones mathematicae},
	number = {2},
	pages = {367--375},
	title = {3-Manifolds with nonnegative Ricci curvature},
	volume = {193},
	year = {2013}}

@misc{EK23,
	archiveprefix = {arXiv},
	author = {Michael Eichmair and Thomas Koerber},
	eprint = {2303.12200},
	primaryclass = {math.DG},
	title = {Schoen's conjecture for limits of isoperimetric surfaces},
	year = {2023}}

@article {HSY24,
    AUTHOR = {He, Shihang and Shi, Yuguang and Yu, Haobin},
     TITLE = {Foliation of area minimizing hypersurfaces in asymptotically
              flat manifolds and {S}choen's conjecture},
   JOURNAL = {Calc. Var. Partial Differential Equations},
  FJOURNAL = {Calculus of Variations and Partial Differential Equations},
    VOLUME = {64},
      YEAR = {2025},
    NUMBER = {2},
     PAGES = {Paper No. 48},
      ISSN = {0944-2669,1432-0835},
   MRCLASS = {53C21 (53C24)},
  MRNUMBER = {4847302},
       DOI = {10.1007/s00526-024-02911-5},
       URL = {https://doi.org/10.1007/s00526-024-02911-5},
}

@article{CCE16,
	author = {Carlotto, Alessandro and Chodosh, Otis and Eichmair, Michael},
	fjournal = {Inventiones Mathematicae},
	issn = {0020-9910,1432-1297},
	journal = {Invent. Math.},
	number = {3},
	pages = {975--1016},
	title = {Effective versions of the positive mass theorem},
	volume = {206},
	year = {2016}}

@article {Schoen1981,
    AUTHOR = {Schoen, Richard and Simon, Leon},
     TITLE = {Regularity of stable minimal hypersurfaces},
   JOURNAL = {Comm. Pure Appl. Math.},
  FJOURNAL = {Communications on Pure and Applied Mathematics},
    VOLUME = {34},
      YEAR = {1981},
    NUMBER = {6},
     PAGES = {741--797},
      ISSN = {0010-3640,1097-0312},
   MRCLASS = {49F22 (53C42 58E15)},
  MRNUMBER = {634285},
MRREVIEWER = {F.\ J.\ Almgren, Jr.},
       DOI = {10.1002/cpa.3160340603},
       URL = {https://doi.org/10.1002/cpa.3160340603},
}

@article {Schoen1989,
AUTHOR = {Schoen, Richard M.},
     TITLE = {Variational theory for the total scalar curvature functional
              for {R}iemannian metrics and related topics},
 BOOKTITLE = {Topics in calculus of variations ({M}ontecatini {T}erme,
              1987)},
    SERIES = {Lecture Notes in Math.},
    VOLUME = {1365},
     PAGES = {120--154},
 PUBLISHER = {Springer, Berlin},
      YEAR = {1989},
      ISBN = {3-540-50727-2},
   MRCLASS = {58E11 (49F99 53C20 58D17 58G30)},
  MRNUMBER = {994021},
MRREVIEWER = {Hubert\ Gollek},
       DOI = {10.1007/BFb0089180},
       URL = {https://doi.org/10.1007/BFb0089180},
}

@article {SY81,
    AUTHOR = {Schoen, Richard and Yau, Shing Tung},
     TITLE = {Proof of the positive mass theorem. {II}},
   JOURNAL = {Comm. Math. Phys.},
  FJOURNAL = {Communications in Mathematical Physics},
    VOLUME = {79},
      YEAR = {1981},
    NUMBER = {2},
     PAGES = {231--260},
      ISSN = {0010-3616,1432-0916},
   MRCLASS = {83C99 (35J60 53A10 53C50 58G40 83C05)},
  MRNUMBER = {612249},
MRREVIEWER = {J.\ L.\ Kazdan},
       URL = {http://projecteuclid.org/euclid.cmp/1103908964},
}

@incollection {Simon84,
    AUTHOR = {Simon, Leon},
     TITLE = {Isolated singularities of extrema of geometric variational
              problems},
 BOOKTITLE = {Harmonic mappings and minimal immersions ({M}ontecatini,
              1984)},
    SERIES = {Lecture Notes in Math.},
    VOLUME = {1161},
     PAGES = {206--277},
 PUBLISHER = {Springer, Berlin},
      YEAR = {1985},
      ISBN = {3-540-16040-X},
   MRCLASS = {58E15 (58E20)},
  MRNUMBER = {821971},
MRREVIEWER = {Harold\ Parks},
       DOI = {10.1007/BFb0075139},
       URL = {https://doi.org/10.1007/BFb0075139},
}

@article{LUY21,
    author = {Lesourd, Martin and Unger, Ryan and Yau, Shing-Tung},
    title = {The positive mass theorem with arbitrary ends},
    journal = { J. Differential Geom. },
    year = {2024}
}

@article{Wang24,
    author = {Zhihan Wang},
    title = {Mean Convex Smoothing of Mean Convex Cones},
    journal = {Geom. Funct. Anal.},
    year = {2024},
    volume = {34},
    pages = {263--301}
}

@article {EK24,
    AUTHOR = {Eichmair, Michael and Koerber, Thomas},
     TITLE = {Foliations of asymptotically flat manifolds by stable constant
              mean curvature spheres},
   JOURNAL = {J. Differential Geom.},
  FJOURNAL = {Journal of Differential Geometry},
    VOLUME = {128},
      YEAR = {2024},
    NUMBER = {3},
     PAGES = {1037--1083},
      ISSN = {0022-040X,1945-743X},
   MRCLASS = {53C20 (53C42)},
  MRNUMBER = {4810218},
MRREVIEWER = {Juan\ A.\ Aledo},
       DOI = {10.4310/jdg/1729092454},
       URL = {https://doi.org/10.4310/jdg/1729092454},
}

@article{EM13,
    author = { Michael Eichmair and Jan Metzger},
	journal = {Invent. Math.},
	year = {2013},
    pages = {591-630},
    volume = {194},
	title = {Unique isoperimetric foliations of asymptotically flat manifolds in all dimensions},
}

@book{Mag12,
    author = {Francesco Maggi},
    title = {Sets of finite perimeter and geometric variational problems, An introduction to geometric measure theory},
    publisher = {Cambridge Studies in Advanced Mathematics},
    year = {2012}
}

@misc{HSY26,
      title={Singularity removal rigidity theorems for minimal hypersurfaces in manifolds with nonnegative scalar curvature}, 
      author={Shihang He and Yuguang Shi and Haobin Yu},
      year={2026},
      eprint={2602.23705},
      archivePrefix={arXiv},
      primaryClass={math.DG},
      url={https://arxiv.org/abs/2602.23705}, 
}

@article{Nash1956,
author = {Nash, John},
title = {The Imbedding Problem for Riemannian Manifolds},
journal = {Annals of Mathematics},
volume = {63},
number = {1},
year = {1956},
pages = {20--63}
}

\end{document}